\theoremstyle{definition}
\newtheorem{thm}{Theorem}[section]
\newtheorem{prop}[thm]{Proposition}     
\newtheorem{cor}[thm]{Corollary}
\newtheorem{rem}[thm]{Remark}
\numberwithin{equation}{section}
\newcommand{\R }{{\mathbf R}}
\newcommand{\N }{{\mathbf N}}
\newcommand{\al}{\alpha}
\newcommand{\be}{\beta}
\newcommand{\gm}{\gamma}
\newcommand{\de}{\delta}
\newcommand{\De}{\Delta}
\newcommand{\ep}{\varepsilon}
\newcommand{\lm}{\lambda}
\newcommand{\cF}{{\cal F}}
\newcommand{\supp}{\operatorname{supp}}
\title{Qualitative properties of generalized principal eigenvalues for superquadratic viscous Hamilton-Jacobi equations}
\author{Emmanuel Chasseigne\footnote{Laboratoire de Math\'ematiques et Physique Th\'eorique (UMR CNRS 6083), F\'ed\'eration Denis Poisson (FR CNRS 2964), Universit\'e Fran\c{c}ois Rabelais, Parc de Grandmont, 37200 Tours, France. Email: {\tt emmanuel.chasseigne@lmpt.univ-tours.fr}.}  \ and  Naoyuki Ichihara\footnote{Department of Physics and Mathematics, Aoyama Gakuin University, 5-10-1 Fuchinobe, Chuo-ku, Sagamihara-shi, 
Kanagawa 252-5258, Japan. Email: {\tt ichihara@gem.aoyama.ac.jp}. }}
\date{}
\begin{document}
\maketitle

\begin{abstract}
This paper is concerned with the ergodic problem for superquadratic viscous Hamilton-Jacobi equations with exponent $m>2$. We prove that the generalized principal eigenvalue of the equation converges to a constant as $m\to\infty$, and that the limit coincides with the generalized principal eigenvalue of an ergodic problem with gradient constraint. We also investigate some qualitative properties of the generalized principal eigenvalue with respect to a perturbation of the potential function. It turns out that different situations take place according to $m=2$, $2<m<\infty$, and the limiting case $m=\infty$.

\end{abstract}

\section{Introduction}
In this paper we study the ergodic problem for the following superquadratic viscous Hamilton-Jacobi equation with exponent $m>2$:
\begin{equation}\label{EPm.intro}
\lm-\De u+\frac1m |Du|^m-f=0\quad \text{in }\ \R^N,
\end{equation}
where $Du$ and $\De u$ denote the gradient and the Laplacian of $u:\R^N\to \R$, respectively, and $f:\R^N\to\R$ is assumed to be continuous on $\R^N$ and to vanish as $|x|\to\infty$.  
The unknown of (\ref{EPm.intro}) is the pair of a real constant $\lm$ and a function $u$. We denote by $\lm_m$ the generalized principal eigenvalue of (\ref{EPm.intro}) which is defined by
\begin{equation}\label{deflm}
\lm_m:=\sup\{\lm\in \R\,|\, \text{ (\ref{EPm.intro}) has a continuous viscosity subsolution $u$\,} \}.
\end{equation}
Here and in what follows, unless otherwise specified, every solution (subsolution, supersolution) $u$ is understood in the viscosity sense. We refer, for instance, to \cite{CIL92,Ko04} for the definition and fundamental properties of viscosity solutions. 

The objective of this paper consists of two parts, which we present as A and B below. 

\vskip0.3cm

\noindent\textsc{A. Convergence as $m\to\infty$.}
We study the convergence of $\lm_m$ as $m\to \infty$. More precisely, let us consider the following ergodic problem with gradient constraint:   
\begin{equation}\label{EPinf.intro}
\max\big\{\lm-\Delta u-f,|Du| -1\big\}=0\quad\text{in }\ \R^N.
\end{equation}
Let $\lm_\infty$ denote the generalized principal eigenvalue of (\ref{EPinf.intro}) defined, similarly as (\ref{deflm}), by the supremum of $\lm\in\R$ such that (\ref{EPinf.intro}) has a continuous viscosity subsolution $u$.
Then we prove that $\lm_{m}$ converges to $\lm_\infty$ as $m\to\infty$. In this sense, ergodic problem (\ref{EPinf.intro}) can be regarded as the extreme case of (\ref{EPm.intro}) where $m=\infty$. 
Note that (\ref{EPinf.intro}) has been studied by \cite{Hy12,Hy15} for functions $f$ that are smooth, convex, and of superlinear growth as $|x|\to\infty$. In these papers, $\lm_\infty$ is derived from the limit of $\de v_\de(0)$ as $\de\to 0$, where $v_\de$ is the solution to the following equation:
\begin{equation*}
\max\big\{\de v_\de-\Delta v_\de-f,|Dv_\de| -1\big\}=0\quad\text{in }\ \R^N.
\end{equation*}
The present paper provides another characterization of $\lm_\infty$ in terms of $\lm_m$ under a different type of assumptions on $f$. 

\vskip0.3cm

\noindent\textsc{B. Qualitative properties.}
We introduce a real parameter $\be$ and consider (\ref{EPm.intro}) and (\ref{EPinf.intro}) with $\be f$ in place of $f$. We are interested in qualitative properties of the generalized principal eigenvalue $\lm_m=\lm_{m,\be}$ with respect to $\be$. In order to illustrate our main results briefly, we assume, for a moment, that $f$ is nonnegative in $\R^N$ with compact support (this can be relaxed, see Section 4). Then it turns out that there exists a critical value $\be_c\leq 0$ such that $\lm_{m,\be}=0$
for all $\be\geq \be_c$, while $\lm_{m,\be}<0$ for all $\be<\be_c$. Notice here that the value of $\be_c$, especially, its negativity depends sensitively on $m$ and $N$. More specifically, the following three situations occur according to the choice of $m$:
\begin{itemize}
\item[(a)] if $m=2$, then $\be_c=0$ for $N=1,2$ and $\be_c<0$ for all $N\geq 3$;
\item[(b)] if $2<m<\infty$, then $\be_c=0$ for $N=1$ and $\be_c<0$ for all $N\geq 2$;
\item[(c)] if $m=\infty$, then $\be_c<0$ for all $N\geq 1$. 
\end{itemize}
The quadratic case (a) has been proved in \cite[Theorem 2.5]{Ic13}, and the second claim in (b) (i.e., the case where $2<m<\infty$ and $N\geq 2$) is also suggested by \cite[Theorem 2.4]{Ic15} in a slightly different context. The essential novelty of this paper, compared with \cite{Ic13,Ic15}, lies in the simultaneous derivation of (b) and (c) in combination with the convergence result obtained in part A. In particular, claim (c) for $N\geq 2$ can be derived by passing to the limit of (b) as $m\to\infty$. To the best of our knowledge, such a qualitative analysis of $\lm_{m,\be}$, especially for $m=\infty$, seems to be new. 
We remark that we consider not only nonnegative functions $f$ but also sign-changing ones, which lead to a more complex
picture where two critical parameters $\beta_-\leq\beta_+$ will
play the role of the above $\beta_c$. For instance, if $N\geq2$
and $2<m\leq\infty$, then there exist $\beta_-<0<\beta_+$ such that
$\lambda_{m,\beta}=0$ for any $\beta\in[\beta_-,\beta_+]$, while
$\lambda_{m,\beta}<0$ outside this interval. See Section 4 for details.

\vskip0.3cm

Our study of critical value $\be_c$ is strongly motivated by the stochastic
control interpretation of $\lm_{m,\be}$. Loosely speaking, if $2\leq m<\infty$, then
the principal eigenvalue $\lm_{m,\be}$ coincides with the optimal value of the following
ergodic stochastic control problem: 
\begin{equation}\label{min}
\begin{aligned}
&\text{Minimize}\quad \limsup_{T\to\infty}\frac1T E\left[\int_0^T \Big\{\frac{1}{m^\ast}|\xi_t|^{m^\ast}+\be f(X^\xi_t)\Big\}dt\right],\\
&\text{subject to}\quad X^\xi_t=\sqrt{2}W_t+\int_0^t\xi_sds,\quad t\geq 0,
\end{aligned}
\end{equation}
where $m^\ast:=m/(m-1)$, and $W=(W_t)$ and $\xi=(\xi_t)$ denote, respectively,
an $N$-dimensional standard Brownian motion and an $(\cF_t)$-adapted control
process defined on some filtered probability space $(\Omega,\cF,P;(\cF_t))$. If $f\geq 0$ in $\R^N$ and 
$\be\geq 0$, then this is nothing but a minimization problem of the total cost
$(1/m^\ast)|\xi_t|^{m^\ast}+\be f(X^\xi_t)$. The situation becomes delicate
as far as $\be<0$. Intuitively, the controller of the optimization problem
(\ref{min}) falls into a trade-off situation between minimizing the cost
$(1/m^\ast)|\xi_t|^{m^\ast}$ and maximizing the reward $|\be| f(X^\xi_t)$. The
dominant term depends on the magnitude of $|\be|$, and the critical value
$\be_c$ is determined as the threshold at which the controller changes his/her
optimal choice: either ``minimize cost" or ``maximize reward". 
In particular, the negativity of $\be_c$ implies the existence of such ``phase
transition", which we intend to characterize in the present paper.  

As to the limiting case where $m=\infty$, the value $\lm_{\infty,\be}$ is related to the following
singular ergodic stochastic control problem: \begin{equation*}
\begin{aligned}
&\text{Minimize}\quad \limsup_{T\to\infty}\frac1T E\left[|\eta|_T+\int_0^T \be f(X^\eta_t)\,dt\right],\\
&\text{subject to}\quad X^\eta_t=\sqrt{2}W_t+\eta_t,\quad t\geq 0,
\end{aligned}
\end{equation*}
where $\eta=(\eta_t)$ stands for an $(\cF_t)$-adapted control process of bounded variations, and $|\eta|_T$ denotes its bounded variation norm. We refer, for instance, to \cite{MRT92} and references therein for more information on singular ergodic stochastic control and associated PDEs with gradient constraint. See also \cite{Ic12, Ic13, Ic15}
for the stochastic control interpretation of $\lm_{m,\be}$ for $2\leq m<\infty$. 
In this paper, we focus only on the PDE aspect and do not discuss its probabilistic counterpart.  

The organization of the paper is  as follows. 
In the next section, we discuss the solvability of (\ref{EPm.intro}). Specifically, we prove that, for any $\lm\leq \lm_{m}$, there exists a viscosity solution $u$ of (\ref{EPm.intro}). In Section 3, we prove the convergence
of $\lm_m$ as $m\to\infty$. Section 4 is devoted to qualitative properties of
$\lm_{m,\be}$ with respect to  $\be$.

\section{Solvability of (\ref{EPm.intro})}
We collect some notation used throughout the paper. 
For any $R>0$, $B_R$ stands for the open ball of radius $R$, centered at the origin. 
For given $k\in\N\cup\{0\}$, $\gm\in(0,1]$, and $p\in[1,\infty]$, let
$C^{k,\gm}(\R^N)$ and $W^{k,p}(\R^N)$ denote local H\"{o}lder (or Lipschitz if
$k=0,\gm=1$) spaces and Sobolev
spaces, respectively.  Recall that the H\"older/Lipschitz norm over $B_R$ of
functions in $C^{k,\gm}(\R^N)$ depends on $R$, in general. 
We also denote by $C_c^\infty(\R^N)$ the set of smooth functions with compact support. Finally, let $C_0(\R^N)$ stand for the totality of continuous functions $f\in C(\R^N)$ such that $f(x)\to 0$ as $|x|\to\infty$. 

\vskip0.3cm

Let $m>2$ and consider the ergodic problem
\begin{equation}\label{EP}
\lm-\De u+\frac1m|Du|^m =f\quad \text{in }\ \R^N,\qquad u(0)=0, 
\end{equation}
where the constraint $u(0)=0$ is imposed to avoid the ambiguity of additive constant  with respect to $u$. 
Throughout this paper, we assume without mentioning that $f$ satisfies the following:
\vskip0.2cm
\noindent{\bf (A1)}\quad $f\in C_0(\R^N)$. 
\vskip0.2cm

To begin with, we recall some regularity estimates that will be needed repeatedly. 
\begin{thm}\label{CLP}
Let $\al:=(m-2)/(m-1)$. \\
(i) \ For any $R>0$, there exists a constant $M_R>0$ such that
\begin{equation*}
|u(x)-u(y)|\leq M_R|x-y|^\al,\qquad x,y\in B_R,
\end{equation*}
for any locally bounded upper semicontinuous viscosity subsolution $u$ of (\ref{EP}), where $M_R$ depends on $\max_{B_R}|f-\lm|$, but is independent of any large $m>2$. \\
(ii) \ Suppose that $f\in C^{0,1}(\R^N)$. Then, for any $R>0$, there exists a constant $K_R>0$ such that
\begin{equation*}
|u(x)-u(y)|\leq K_R|x-y|,\qquad x,y\in B_R,
\end{equation*}
for any continuous viscosity solution $u$ of (\ref{EP}), where $K_R$ may depend on the sup-norm and the Lipschitz norm of $f-\lm$ over a larger ball, say $B_{R+1}$, but is independent of any large $m>2$. 
\end{thm}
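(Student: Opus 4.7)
The plan is to prove both estimates by an Ishii--Lions / barrier argument that exploits the superquadratic growth of the Hamiltonian. The scaling observation at the heart of the proof is that for the candidate modulus $\omega(r)=Lr^{\al}$ with $\al=(m-2)/(m-1)$, one has $(\al-1)m=\al-2=-m/(m-1)$, so that the gradient term $\tfrac{1}{m}|D\omega|^m$ and the diffusion term $-\De\omega$ carry the same power of $r$. Since the former scales like $L^m/m$ while the latter is only linear in $L$, the Hamiltonian absorbs the Laplacian as soon as $L$ is large enough, and the threshold remains bounded as $m\to\infty$ thanks to $\al^{m-1}=(1-\tfrac{1}{m-1})^{m-1}\to e^{-1}$ and $\al+N-2\to N-1$.

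For part (i), I would fix $R>0$ and $y_0\in B_R$ and try to build a singular barrier $\phi(x):=u(y_0)+L|x-y_0|^{\al}+C$ that is a classical strict supersolution of (\ref{EP}) on $B_{r_0}(y_0)\setminus\{y_0\}$. A direct computation with $r:=|x-y_0|$ yields
\[
\lm-\De\phi+\tfrac{1}{m}|D\phi|^m-f \;=\; \lm-f+L\al\,r^{-m/(m-1)}\Big(\tfrac{L^{m-1}\al^{m-1}}{m}-(\al+N-2)\Big).
\]
Choosing $L$ so that $L^{m-1}\al^{m-1}/m\geq |\al+N-2|+1$ (which fixes $L=L(N,\max_{B_{R+1}}|\lm-f|)$ independently of large $m$) makes the parenthesis $\geq 1$, and since $r^{-m/(m-1)}\to+\infty$ as $r\to 0$, the whole right-hand side becomes positive on $B_{r_0}(y_0)\setminus\{y_0\}$ once $r_0$ is taken small enough. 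Taking $C$ larger than the oscillation of $u$ on $B_{R+r_0}$ gives $u\leq\phi$ on $\partial B_{r_0}(y_0)$, while $\phi(x)\to+\infty$ at $y_0$ handles the singular centre. Viscosity comparison then delivers $u\leq\phi$ throughout $B_{r_0}(y_0)$, whence $u(x)-u(y_0)\leq L|x-y_0|^{\al}+C$ locally, and a standard covering argument combined with the local boundedness of $u$ propagates this to the H\"older bound on all of $B_R$.

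For part (ii), with $f\in C^{0,1}$ and $u$ a continuous viscosity solution, I would either (a) apply the two-sided Ishii--Lions doubling-variable technique on $B_{R+1}\times B_{R+1}$ with a concave modulus $\omega(r)=Lr-\kappa r^2$, subtracting the sub- and supersolution inequalities at the doubled maximum and using Lipschitz continuity of $f$ to close the argument; or (b) run a Bernstein-type computation on $w=|Du|^2$ (permissible because the solution is $C^{2,\gamma}_{\mathrm{loc}}$ by interior elliptic regularity applied to (\ref{EP})), cut off by some $\eta\in C_c^\infty(B_{R+1})$ and apply the maximum principle. The resulting Lipschitz constant $K_R$ depends on $N$, $R$, and $\|f-\lm\|_{C^{0,1}(B_{R+1})}$; its independence of $m$ again follows from the fact that whenever $|Du|$ exceeds a fixed threshold, the term $\tfrac{1}{m}|Du|^m$ dominates $\De u$ together with the Lipschitz contribution of $f$, uniformly for large $m$.

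The main obstacle I anticipate is the clean viscosity bookkeeping: in (i), justifying comparison against a barrier singular at $y_0$ (which is actually routine, since $\phi\to+\infty$ forces the supremum of $u-\phi$ into $B_{r_0}(y_0)\setminus\{y_0\}$), and in (ii), ensuring that every constant arising from Crandall--Ishii or from the Bernstein cutoff estimate remains bounded as $m\to\infty$. The quantitative $m$-uniformity ultimately reduces to the limits $\al^{m-1}\to e^{-1}$ and $L^{m-1}/m\to\infty$ for any $L>1$, so the smallness conditions defining $L$ can be met by a single, $m$-independent choice.
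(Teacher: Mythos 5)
The paper does not actually prove this theorem from scratch: it invokes Theorems 1.1 and 3.1 of \cite{CLP10} and merely observes that the constants there can be traced to be uniform in large $m$. Your attempt to reprove part (i) by a one-sided barrier has a genuine gap. The comparison you set up yields $u(x)-u(y_0)\le L|x-y_0|^{\al}+C$ with $C\ge \operatorname{osc}_{B_{R+r_0}}u$. This inequality carries no H\"older information: as $|x-y_0|\to 0$ its right-hand side tends to $C>0$, so it does not even imply continuity of $u$ at $y_0$, and no covering or chaining argument can remove an additive constant of the size of the oscillation (the estimate is already trivially true with $L=0$). Moreover, even if the constant could be removed, your $M_R$ would depend on $\sup|u|$, whereas the statement --- and its later use in the compactness arguments of Proposition \ref{staf} and Theorem \ref{exist}, where equicontinuity must come \emph{before} any uniform bound on the solutions --- requires $M_R$ to depend only on $\max_{B_R}|f-\lm|$, uniformly over all locally bounded subsolutions. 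Incidentally, your justification at the centre is also incorrect: $\phi$ does not tend to $+\infty$ at $y_0$ (only $|D\phi|$ blows up there); what actually saves that point is simply $u(y_0)-\phi(y_0)=-C\le 0$.

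The missing idea --- the heart of \cite{CLP10} --- is a second barrier term that blows up as $|x-y_0|\to r_0^-$ and is itself a supersolution of the coercive equation near $\partial B_{r_0}(y_0)$ precisely because the Hamiltonian is superquadratic (the same mechanism as Osserman-type universal bounds). With a comparison function of the form $L|x-y_0|^{\al}+\Psi(|x-y_0|)$, $\Psi(s)\to+\infty$ as $s\to r_0^-$, a positive maximum of $u-\phi$ is excluded both on the boundary sphere and at interior points without ever invoking $\operatorname{osc}(u)$; this is what makes the estimate universal. Your scaling computation itself is correct and is indeed the reason the constants are $m$-uniform: both $-\De(Lr^{\al})$ and $\frac1m|D(Lr^{\al})|^m$ carry the weight $r^{-m^{\ast}}$, and the threshold for $L$ stays bounded as $m\to\infty$ since $\al^{m-1}\to e^{-1}$. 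But it only handles the interior case. Part (ii) as written is a menu of two standard strategies (Ishii--Lions doubling, Bernstein) rather than a proof of either; both can be made to work, but the $m$-uniformity of the Crandall--Ishii or cutoff constants is exactly the point that must be checked rather than asserted.
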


\begin{proof}
This theorem is a direct consequence of \cite[Theorems 1.1 and 3.1]{CLP10}. Notice here that the gradient term of the equation in \cite{CLP10} is not $(1/m)|Du|^m$ but $|Du|^p$ with $p>2$. However, by a careful reading of their proof, one can see that $M_R$ and $K_R$ can be taken uniformly with respect to any large $m>2$.  
\end{proof}

It is obvious from Theorem \ref{CLP} that any locally bounded upper semicontinuous viscosity subsolution of (\ref{EP}) belongs to $ C^{0,\al}(\R^N)$ with $\al=(m-2)/(m-1)$. Taking this fact into account, one can redefine the generalized principal eigenvalue of (\ref{EP}) by 
\begin{equation}\label{lm}
\lm_m:=\sup\{\lm\in \R\,|\, \text{(\ref{EP}) has a viscosity subsolution } u\in C^{0,\al}(\R^N) \}.
\end{equation}
Note here that $\lm_m\ne -\infty$.  Indeed, $(\lm,u)=(\inf_{\R^N}f,0)$ is a viscosity subsolution of (\ref{EP}), so that $\lm_m\geq \inf_{\R^N}f>-\infty$. 

We first observe a few properties of $\lm_m$ that can be verified by its very definition. In what follows, we often use the notation $\lm_m(f)$ to emphasize the dependence of $\lm_m$ on the function $f$.

\begin{prop}\label{cd}
Let $f,g\in C_0(\R^N)$. We denote by $\lm_m(f), \lm_m(g)$ the associated generalized principal eigenvalues of (\ref{EP}), respectively. Then the following (i)-(iii) hold.\\
(i) \ $f\leq g$ in $\R^N$ implies $\lm_m(f)\leq \lm_m(g)$.\\
(ii) \ $(1-\de)\lm_m(f)+\de \lm_m(g)\leq \lm_m((1-\de)f+\de g)$ for any $\de\in (0,1)$.\\
(iii) \ $\lm_m(f+c)=\lm_m(f)+c$ for any $c\in \R$.
\end{prop}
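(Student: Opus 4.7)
The plan is to obtain all three statements directly from definition \eqref{lm}, with the only real work hidden in part (ii), where one must exploit the convexity of the Hamiltonian $(p,X)\mapsto -\operatorname{tr}(X)+\tfrac{1}{m}|p|^m$.

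For (i), I would fix an arbitrary $\lm<\lm_m(f)$ and pick a $C^{0,\al}$ viscosity subsolution $u$ of \eqref{EP} associated with the data $f$. Since $g\geq f$ pointwise, the viscosity inequality $\lm-\De u+\tfrac{1}{m}|Du|^m\leq f\leq g$ holds, so $u$ is automatically a subsolution with data $g$. Hence $\lm\leq \lm_m(g)$, and taking the supremum over $\lm$ gives (i). For (iii), I would note that a function $u$ is a subsolution of \eqref{EP} with right-hand side $f+c$ and eigenvalue $\lm$ if and only if it is a subsolution with right-hand side $f$ and eigenvalue $\lm-c$; thus the admissible set of eigenvalues for $f+c$ is the $c$-translate of that for $f$, and (iii) follows upon passing to the supremum.

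The substance lies in (ii). My plan is to prove the following convex-combination property: if $u$ is a $C^{0,\al}$ subsolution of \eqref{EP} with data $(\lm,f)$ and $v$ a $C^{0,\al}$ subsolution with data $(\mu,g)$, then $w:=(1-\de)u+\de v$ is a $C^{0,\al}$ subsolution with data $\bigl((1-\de)\lm+\de\mu,\,(1-\de)f+\de g\bigr)$. Granted this, the inequality in (ii) follows by letting $\lm\uparrow\lm_m(f)$ and $\mu\uparrow\lm_m(g)$ and invoking the definition of $\lm_m$ for the averaged potential. The heuristic behind the convex-combination property is the pointwise bound
\[
-\De w+\frac{1}{m}|Dw|^m\leq (1-\de)\Bigl[-\De u+\frac{1}{m}|Du|^m\Bigr]+\de\Bigl[-\De v+\frac{1}{m}|Dv|^m\Bigr],
\]
which is immediate when $u,v$ are smooth, thanks to the linearity of $\De$ and convexity of $p\mapsto |p|^m$.

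The main obstacle will be transferring this pointwise inequality from the smooth to the viscosity setting, since $u$ and $v$ are a priori only $C^{0,\al}$ and need not be simultaneously twice-differentiable at a common test point. I would handle this either by a sup-convolution regularization $u^\ep,v^\ep$, which yields semiconvex approximations that are pointwise twice differentiable a.e. and satisfy a slightly perturbed equation, followed by passage to the limit using the $m$-uniform Hölder estimates of Theorem \ref{CLP} for compactness; or by a direct test-function/doubling-of-variables argument exploiting the convexity of the Hamiltonian. Either approach is standard for convex Hamilton--Jacobi equations, and the Hölder bound in Theorem \ref{CLP} provides exactly the regularity/compactness needed to close the limit.
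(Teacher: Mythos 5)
Your proposal is correct and follows essentially the same route as the paper: parts (i) and (iii) are read off from the definition exactly as in the paper's proof, and for (ii) the paper likewise relies (with only the words ``one can verify (ii) similarly'') on the fact that a convex combination of subsolutions is a subsolution of the averaged equation. You are right that this last step is not automatic for merely $C^{0,\al}$ viscosity subsolutions, and your proposed fixes (sup-convolution, or mollification \`a la \cite{BJ02,Kr00}) are exactly the standard tools; an alternative that stays entirely within the paper's own toolbox is to first establish (i) and (iii), which suffice for Propositions \ref{Cinfty} and \ref{staf}, then reduce (ii) to Lipschitz $f,g$ and to \emph{classical} smooth subsolutions, for which the convex-combination inequality is pointwise and trivial.
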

\begin{proof}
We first show (i). Let $u\in C^{0,\al}(\R^N)$ be a viscosity subsolution of (\ref{EP}) with $f$. Then it is also a viscosity subsolution of  (\ref{EP}) with $g$ in place of $f$. Hence, $\lm_m(f)\leq \lm_m(g)$ by definition. One can also verify (ii) similarly. The validity of (iii) is obvious from the definition of $\lm_m$. Hence, we have completed the proof.
\end{proof}

The following result implies that, if $f\in C^{0,1}(\R^N)$, then ``viscosity subsolution" in the definition of $\lm_m$ can be replaced by ``classical subsolution".

\begin{prop}\label{Cinfty}
Suppose that $f\in C^{0,1}(\R^N)$. Then, for any $\lm< \lm_m$, there exists a classical subsolution $u\in C^{\infty}(\R^N)$ of (\ref{EP}). 
\end{prop}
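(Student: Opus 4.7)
Given $\lm<\lm_m$, the plan is to fix an intermediate level $\lm^*\in(\lm,\lm_m)$, produce a regular enough viscosity subsolution $u^*$ of~(\ref{EP}) at level $\lm^*$, and then mollify it. The convexity of $p\mapsto\tfrac1m|p|^m$ together with the linearity of $-\De$ will ensure, via Jensen's inequality, that the mollified function is a classical subsolution at a level only slightly below $\lm^*$; the Lipschitz regularity of $f$ (or, more precisely, its uniform modulus of continuity on $\R^N$, which follows from $f\in C_0(\R^N)\cap C^{0,1}(\R^N)$) will let us absorb this loss into the gap $\lm^*-\lm$.

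For the first step, I would use the solvability of~(\ref{EP}) at $\lm^*<\lm_m$ (announced at the start of Section~2) to produce a continuous viscosity solution $u^*$. Theorem~\ref{CLP}(ii) applies because $f\in C^{0,1}(\R^N)$ and yields $u^*\in C^{0,1}_{\mathrm{loc}}(\R^N)$, hence $|Du^*|^m\in L^\infty_{\mathrm{loc}}(\R^N)$. Rewriting the PDE as $-\De u^*=f-\lm^*-\tfrac1m|Du^*|^m$, the right-hand side is locally bounded, so $W^{2,p}$-theory followed by Schauder bootstrapping gives $u^*\in C^{2,\be}_{\mathrm{loc}}(\R^N)$ for some $\be>0$, and thus $u^*$ is a classical solution of~(\ref{EP}) at level $\lm^*$. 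If Proposition~\ref{Cinfty} were needed prior to the solvability result, one could instead start from a H\"older viscosity subsolution given by the definition of $\lm_m$, pass to its sup-convolution to produce a semiconvex viscosity subsolution of a slightly perturbed equation, and then run essentially the mollification argument below; the nonnegativity of the singular part of the distributional Hessian of a semiconvex function would keep the relevant inequality pointing in the right direction.

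Next I would mollify: for a standard mollifier $\rho_\de\in C_c^\infty(\R^N)$, set $v_\de:=u^**\rho_\de\in C^\infty(\R^N)$. By linearity of the Laplacian on the classical solution $u^*$,
\[
-\De v_\de(x)=\int(-\De u^*)(x-y)\,\rho_\de(y)\,dy,
\]
and by convexity of $p\mapsto\tfrac1m|p|^m$ together with Jensen's inequality for the probability measure $\rho_\de(y)\,dy$,
\[
\tfrac1m|Dv_\de(x)|^m\;\le\;\int\tfrac1m|Du^*(x-y)|^m\,\rho_\de(y)\,dy.
\]
Adding these two estimates and inserting the equation satisfied by $u^*$ gives $\lm^*-\De v_\de+\tfrac1m|Dv_\de|^m\le f*\rho_\de$ in $\R^N$. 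Because $f\in C_0(\R^N)$ is uniformly continuous on $\R^N$ with some modulus $\omega_f$, we have $f*\rho_\de\le f+\omega_f(\de)$ everywhere in $\R^N$.

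Finally, choosing $\de$ so small that $\omega_f(\de)<\lm^*-\lm$ gives the pointwise inequality $\lm-\De v_\de+\tfrac1m|Dv_\de|^m\le f$ in $\R^N$, proving that $v_\de\in C^\infty(\R^N)$ is a classical subsolution at level $\lm$, as required. The main technical obstacle lies in the first step: the mollification procedure needs a subsolution whose Laplacian is at least a locally bounded function (or a measure of controlled sign), and it is precisely this that is supplied either by the elliptic bootstrap built on Theorem~\ref{CLP}(ii)---explaining the role of the hypothesis $f\in C^{0,1}$---or by the sup-convolution regularization in the alternative route. The remaining manipulations, Jensen's inequality plus the uniform modulus of $f$, are then routine.
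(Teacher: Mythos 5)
Your core mechanism (mollify, use linearity of $-\De$ plus convexity of $p\mapsto\frac1m|p|^m$ via Jensen, absorb the error in $f$ through uniform continuity) is exactly the engine of the paper's proof, but the way you reach a mollifiable object differs. The paper never upgrades to a classical solution: it replaces $f$ by $f_\ep(x)=\min_{|e|<\ep}f(x+e)$ (Krylov's ``shaking the coefficients''), notes via Proposition \ref{cd} that the perturbed eigenvalue still exceeds $\lm$, observes that every translate $u^{(\ep)}(\cdot-e)$, $|e|<\ep$, is then a viscosity subsolution of the \emph{original} equation at the \emph{same} level $\lm$, and invokes \cite[Lemma 2.7]{BJ02} to conclude that the convolution of this family of subsolutions of a convex equation is a smooth subsolution. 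Your version trades the inf-shift of $f$ for an intermediate level $\lm^*$ and the estimate $f*\rho_\de\le f+\omega_f(\de)$, which is an equally valid way to pay for the averaging; and your sup-convolution variant (semiconvexity, nonnegativity of the singular part of the distributional Hessian, a.e.\ pointwise subsolution property at Alexandrov points) is essentially a self-contained proof of the Barles--Jakobsen lemma in this setting. What the paper's route buys is that it stays entirely at the level of H\"older viscosity subsolutions and outsources the regularization lemma to a citation.

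The one point you must be careful about is logical ordering: your primary route starts from a viscosity \emph{solution} at level $\lm^*$, i.e.\ from Theorem \ref{exist}, but in the paper Theorem \ref{exist} is proved \emph{after} and \emph{by means of} Proposition \ref{Cinfty} (the smooth subsolution $u_-$ is used as boundary data for the Dirichlet approximations). So that route is circular as the material is organized, and in addition the $W^{2,p}$/Schauder bootstrap silently requires identifying the viscosity solution with the strong solution of $-\De v=f-\lm^*-\frac1m|Du^*|^m$, which needs a comparison step you do not spell out. You anticipated this and supplied the sup-convolution alternative, which is the one that actually has to carry the proof; it is correct, but it is doing all the work and deserves more than a parenthetical sketch (in particular, the uniformity in $x,y\in\R^N$ of the H\"older bound from Theorem \ref{CLP}(i), needed to control the radius of the sup-convolution globally, should be noted: it holds because $M_R$ depends only on $\sup_{\R^N}|f-\lm|$).
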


\begin{proof}
Fix any $\lm<\lm_m$ and construct a smooth subsolution $u$ of (\ref{EP}). To this end, we follow the ingenious idea due to \cite{BJ02,Kr00}. Set $f_\ep(x):=\min_{|e|<\ep}f(x+e)$ for $\ep>0$. Then, $f_\ep\in C^{0,1}(\R^N)\cap C_0(\R^N)$, $f_\ep\leq f$ in $\R^N$, and $\{f_\ep\}$ converges to $f$ uniformly in $\R^N$ as $\ep\to 0$. Let $\lm_m^{(\ep)}$ be the generalized principal eigenvalue of (\ref{EP}) with $f_\ep$ in place of $f$.
Then, in view of Proposition \ref{cd} and by choosing $\ep>0$ sufficiently small, we may assume that $\lm<\lm_m^{(\ep)}\leq \lm_m$. In particular, for the above $\lm$, there exists a viscosity subsolution $u^{(\ep)}\in C^{0,\al}(\R^N)$ of (\ref{EP}) with $f_\ep$ in place of $f$. 
Since $f_\ep(\,\cdot\,-e)\leq f$ in $\R^N$ for any $|e|<\ep$, one can also see that $u^{(\ep)}(\,\cdot\,-e)$ is a viscosity subsolution of (\ref{EP}) for any $|e|<\ep$. 

Now, let $\{\rho_\de\}_{\de>0}\subset C_c^\infty(\R^N)$ be a family of mollifier functions, i.e., $\rho_\de\geq 0$ in $\R^N$, $\int_{\R^N}\rho_\de(x)\,dx=1$, and $\supp \rho_\de\subset B_\de$ for all $\de>0$. Set $u_\de^{(\ep)}(x):=(u^{(\ep)}\ast \rho_\de)(x)$ for $\de<\ep$, where $\ast$ stands for the usual convolution. Then, by noting  the convexity of $p\mapsto (1/m)|p|^m$, one can see, similarly as in the proof of \cite[Lemma 2.7]{BJ02}, that $u:=u_\de^{(\ep)}$ is a smooth viscosity subsolution of (\ref{EP}). Since a smooth viscosity subsolution is a classical subsolution, we have completed the proof.  
\end{proof}

We next verify that $\lm_m$ is nonpositive.
\begin{prop}\label{upp}
One has $\lm_m\leq 0$. In particular, $\lm_m$ is finite.
\end{prop}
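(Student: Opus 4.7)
The plan is to argue by contradiction. Suppose $\lm_m>0$ and fix $\lm\in(0,\lm_m)$. By the very definition of $\lm_m$, there is a viscosity subsolution $u\in C^{0,\al}(\R^N)$ of \eqref{EP}, normalized by $u(0)=0$, with $\al=(m-2)/(m-1)<1$.

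The first step is to upgrade the local H\"older bound of Theorem \ref{CLP}(i) into a \emph{global} sublinear bound for $u$. Since $f\in C_0(\R^N)$ is globally bounded, $\max_{B_R}|f-\lm|\le\|f\|_\infty+|\lm|$ uniformly in $R$, and, reading Theorem \ref{CLP}(i) as an interior estimate on $\R^N$, the constant $M_R$ can then be chosen uniformly in $R$. Combined with $u(0)=0$, this yields
$$|u(x)|\le M|x|^\al\qquad\text{for all }x\in\R^N,$$
for some $M>0$; in particular $u(x)=o(|x|^2)$ as $|x|\to\infty$. Since $f\in C_0(\R^N)$, I can also pick $R_0>0$ such that $f<\lm/2$ outside $B_{R_0}$; dropping the nonnegative term $\frac1m|Du|^m$ in the subsolution inequality then gives, in the viscosity sense,
$$-\De u\le f-\lm\le -\lm/2\quad\text{on }\R^N\setminus B_{R_0}.$$

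The contradiction is produced by a sub-mean-value argument at a far-away point. Given $x_0$ with $|x_0|>2R_0$, set $r:=|x_0|/2$, so that $\overline{B}_r(x_0)\subset\R^N\setminus B_{R_0}$. Since adding a smooth test function preserves the viscosity sense, the auxiliary function $v(x):=u(x)-\frac{\lm}{4N}|x-x_0|^2$ is a continuous viscosity subsolution of $-\De v\le 0$ on $B_r(x_0)$, hence a continuous subharmonic function there, so the classical sub-mean-value inequality $v(x_0)\le\frac{1}{|\partial B_r|}\int_{\partial B_r(x_0)}v\,d\sigma$ combined with $v(x_0)=u(x_0)$ and $|y-x_0|=r$ on $\partial B_r(x_0)$ rearranges to
$$u(x_0)+\frac{\lm}{4N}r^2\le\frac{1}{|\partial B_r|}\int_{\partial B_r(x_0)}u\,d\sigma.$$
Every $y\in\partial B_r(x_0)$ satisfies $|y|\le 3|x_0|/2$, so the right-hand side is at most $M(3|x_0|/2)^\al$, while $u(x_0)\ge -M|x_0|^\al$ by the sublinear bound. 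Substituting $r=|x_0|/2$ and rearranging,
$$\frac{\lm}{16N}|x_0|^2\le M|x_0|^\al+M\Big(\frac{3|x_0|}{2}\Big)^\al.$$
Letting $|x_0|\to\infty$, the left-hand side diverges like $|x_0|^2$ while the right-hand side grows only like $|x_0|^\al$ with $\al<1$, a contradiction. Thus $\lm_m\le 0$, and the finiteness of $\lm_m$ is immediate from $\lm_m\ge\inf_{\R^N}f>-\infty$ already observed after \eqref{lm}.

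The main obstacle is the first step: making the $O(|x|^\al)$ bound on $u$ truly global, i.e.\ uniform in $R$, which requires reading Theorem \ref{CLP}(i) as an interior estimate and invoking the global boundedness of $f\in C_0(\R^N)$. Once this sublinear growth is secured, the remainder is a textbook application of the mean value property of subharmonic functions and works uniformly for every dimension $N\ge 1$.
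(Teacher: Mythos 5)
Your argument is correct, but it is genuinely different from the one in the paper. The paper first reduces to $f\in C^{0,1}(\R^N)$ and to classical smooth subsolutions via Proposition \ref{Cinfty}, then tests the subsolution inequality against $\eta^{m^\ast}$ with $\int\eta^{m^\ast}=1$, absorbs the term $Du\cdot D(\eta^{m^\ast})$ into $\frac1m|Du|^m\eta^{m^\ast}$ by Young's inequality, and finally kills the remaining $\frac{1}{m^\ast}\int|D\eta|^{m^\ast}$ by the rescaling $\eta_\de(x)=\de^{N/m^\ast}\eta(\de x)$, $\de\to0$; this integral argument is also the template reused later for the bound (\ref{m}) in Proposition \ref{betac}. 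You instead stay entirely at the viscosity level: you use the H\"older estimate to get the global sublinear bound $|u(x)|\le M|x|^\al$, discard the coercive gradient term to obtain $-\De u\le-\lm/2$ far out, and contradict the sublinear growth by comparing with a paraboloid via the sub-mean-value inequality. This buys you a proof that needs neither the smoothing of $f$ nor Proposition \ref{Cinfty}, and it isolates nicely where positivity of $\lm$ would fail. The one point that carries all the weight, as you note yourself, is the uniformity of $M_R$ in $R$: the paper's Theorem \ref{CLP}(i) literally only asserts that $M_R$ depends on $\max_{B_R}|f-\lm|$, and you need the stronger (but true) fact from \cite{CLP10} that for $m>2$ the H\"older constant is independent of the domain, so that boundedness of $f-\lm$ gives a single constant $M$ valid on all of $\R^N$. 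With that reading granted, every remaining step (the viscosity-to-distributional passage for $-\De v\le0$ and the sub-mean-value property of continuous subharmonic functions) is standard, and the conclusion $\lm_m\le0$, together with $\lm_m\ge\inf_{\R^N}f>-\infty$, follows as you state.
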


\begin{proof}
In view of Proposition \ref{cd} (i), it suffices to consider the case where $f\in C^{0,1}(\R^N)$. Fix any $\lm<\lm_m$, and let $u\in C^{\infty}(\R^N)$ be a classical subsolution of (\ref{EP}). Existence of such $u$ is guaranteed by virtue of Proposition \ref{Cinfty}. Then, for any nonnegative test function $\eta\in C_c^\infty(\R^N)$ such that $\int_{\R^N}\eta(x)^{m^\ast}dx=1$, where $m^\ast:=m/(m-1)$,
we have
\begin{align*}
\lm\int_{\R^N}\eta^{m^\ast}dx+\int_{\R^N}Du\cdot D(\eta^{m^\ast})\, dx+\frac1m\int_{\R^N}|Du|^m\eta^{m^\ast}\, dx\leq  \int_{\R^N}f\eta^{m^\ast}\, dx.
\end{align*}
Noting 
$D(\eta^{m^\ast})=\eta^{m^\ast/m}D\eta$ and 
\begin{equation*}
Du\cdot D(\eta^{m^\ast})\leq \frac1m|Du|^m\eta^{m^\ast}+\frac{1}{m^\ast}|D\eta|^{m^\ast},
\end{equation*}
we see that, for any $\ep>0$, 
\begin{equation*}
\lm=\lm\int_{\R^N}\eta^{m^\ast}dx\leq \ep+\int_{\R^N}(f-\ep)_+\eta^{m^\ast}\, dx+\frac{1}{m^\ast}\int_{\R^N}|D\eta|^{m^\ast}dx,
\end{equation*}
where $(f-\ep)_+$ denotes the positive part of $f-\ep$. Furthermore, if we define $\eta_\de(x):=\de^{N/m^\ast}\eta(\de x)$ for $\de>0$, which still satisfies $\int_{\R^N}\eta_\de(x)^{m^\ast}dx=1$ for any $\de>0$, then plugging this into the above $\eta$, we have
\begin{equation}\label{mg}
\lm\leq \ep+\int_{\R^N}(f(x)-\ep)_+\eta_\de(x)^{m^\ast}\, dx+\frac{\de^{m^\ast}}{m^\ast}\int_{\R^N}|D\eta(x)|^{m^\ast}dx.
\end{equation}
Sending $\de\to 0$, we obtain $\lm\leq \ep$. Since $\ep>0$ and $\lm<\lm_m$ are arbitrary, we conclude that $\lm_m\leq 0$. Hence, we have completed the proof. 
\end{proof}

The following proposition states a stability of $\lm_m(f)$ with respect to $f$.  
\begin{prop}\label{staf}
Let $f,g\in C_0(\R^N)$. Then $|\lm_m(f)-\lm_m(g)|\leq \max_{\R^N}|f-g|$.
In particular, if $\{f_n\}\subset C_0(\R^N)$ converges as $n\to\infty$ to some $f\in C_0(\R^N)$ uniformly in $\R^N$, then $\lm_m(f_n)$ converges to $\lm_m(f)$ as $n\to\infty$. Moreover, if $\{u_n\}$ is a family of viscosity solutions of (\ref{EP}) with $f=f_n$ and $\lm=\lm_m(f_n)$, then, along a suitable subsequence, $\{u_n\}$ converges as $n\to\infty$ to a viscosity solution $u$ of (\ref{EP}) with $\lm=\lm_m(f)$ locally uniformly in $\R^N$. 
\end{prop}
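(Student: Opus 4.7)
The plan is to deduce everything from two ingredients already at hand: the order and translation properties of $\lm_m$ recorded in Proposition \ref{cd}, and the uniform (in $m$, hence in $n$) local H\"older estimate of Theorem \ref{CLP}(i).

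First I would establish the Lipschitz bound for $\lm_m$ as a functional of the data. Setting $c:=\max_{\R^N}|f-g|$, the pointwise sandwich $g-c\leq f\leq g+c$ combined with parts (i) and (iii) of Proposition \ref{cd} gives at once
\begin{equation*}
\lm_m(g)-c=\lm_m(g-c)\leq \lm_m(f)\leq \lm_m(g+c)=\lm_m(g)+c.
\end{equation*}
This is essentially a two-line argument, and the uniform convergence $f_n\to f$ then yields $\lm_m(f_n)\to \lm_m(f)$ immediately.

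Next, to extract a convergent subsequence of solutions, I would note that each $u_n$ is, in particular, a viscosity subsolution of (\ref{EP}) with data $f=f_n$ and $\lm=\lm_m(f_n)$. Theorem \ref{CLP}(i) then provides a local $\al$-H\"older estimate with constant $M_R$ depending only on $\max_{B_R}|f_n-\lm_m(f_n)|$. Since $\{f_n\}$ is uniformly bounded on each $B_R$ and $\{\lm_m(f_n)\}$ is bounded by the Lipschitz estimate just obtained, $M_R$ can be chosen uniformly in $n$. Combined with the anchoring $u_n(0)=0$, this yields local equiboundedness and equicontinuity of $\{u_n\}$. Arzel\`a--Ascoli and a diagonal extraction then produce a subsequence converging locally uniformly to some $u\in C^{0,\al}(\R^N)$ with $u(0)=0$.

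The remaining and most delicate point is to verify that $u$ solves (\ref{EP}) with $\lm=\lm_m(f)$. Here I would appeal to the standard stability of viscosity solutions under locally uniform convergence of the solutions together with locally uniform convergence of the nonlinearity in $(x,p,X)$; since each $u_n$ is simultaneously a sub- and a supersolution, the limit $u$ inherits both properties and hence is a solution. This invocation of viscosity stability is the main place where care is needed, but the framework of \cite{CIL92,Ko04} applies directly since the Hamiltonian $(x,p,X)\mapsto \lm_m(f_n)-\mathrm{tr}(X)+\frac1m|p|^m-f_n(x)$ is continuous in all arguments and the convergence of its data is uniform.
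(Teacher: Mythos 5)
Your proposal is correct and follows essentially the same route as the paper: the sandwich $g-c\leq f\leq g+c$ combined with Proposition \ref{cd} (i) and (iii) for the Lipschitz estimate, then Theorem \ref{CLP} plus Arzel\`a--Ascoli for precompactness, and viscosity stability to identify the limit. Your additional remark that the H\"older constant $M_R$ is uniform in $n$ because $\{f_n\}$ and $\{\lm_m(f_n)\}$ are bounded is a useful detail the paper leaves implicit, but it is not a different argument.
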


\begin{proof}
Since $f\leq g+\max_{\R^N}(f-g)_+$ in $\R^N$, we see, in view of Proposition \ref{cd} (i) and (iii), that $\lm_m(f)-\lm_m(g)\leq \max_{\R^N}(f-g)_+$. Changing the role of $f$ and $g$, we obtain the first claim. The second claim is obvious from the first one. In order to verify the last claim, we observe from Theorem \ref{CLP} that $\{u_n\}$ pre-compact in $C(\R^N)$. Applying Ascoli-Arzela theorem, we see that $\{u_n\}$ converges, along a suitable subsequence, to a function $u\in C^{0,\al}(\R^N)$ locally uniformly in $\R^N$. By the stability property of viscosity solutions, we conclude that $u$ is a viscosity solution of (\ref{EP}) with $\lm=\lm_m(f)$. Hence, we have completed the proof.
\end{proof}

We now state the main result of this section.

\begin{thm}\label{exist}
For any $\lm\leq  \lm_m$, there exists a viscosity solution $u\in C^{0,\al}(\R^N)$ of (\ref{EP}).  Moreover, if $f\in C^{0,1}(\R^N)$, then for any $\lm\leq \lm_m$, there exists a classical solution $u\in C^{2}(\R^N)$ of (\ref{EP}). 
\end{thm}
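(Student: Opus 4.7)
My plan is to construct a viscosity solution of (\ref{EP}) by solving Dirichlet problems on balls $B_R$ using the $C^{0,\al}$ subsolution guaranteed by the definition of $\lm_m$ as lower barrier, then passing to the limit $R\to\infty$ via the uniform-in-$R$ H\"older estimate of Theorem \ref{CLP}(i). The case $\lm=\lm_m$ is then recovered by an additional limit $\lm\uparrow\lm_m$, and the classical $C^2$ regularity in the Lipschitz case follows by a standard elliptic bootstrap starting from the Lipschitz bound of Theorem \ref{CLP}(ii).

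First fix $\lm<\lm_m$. By the definition of $\lm_m$, I pick an intermediate $\lm<\lm'<\lm_m$ and a viscosity subsolution $\phi\in C^{0,\al}(\R^N)$ of (\ref{EP}) at level $\lm'$; since $\lm<\lm'$, $\phi$ is in fact a strict subsolution at level $\lm$, which is convenient for Perron's method. For each $R>0$ I consider the Dirichlet problem
\[
\lm-\De u+\tfrac{1}{m}|Du|^m=f\quad\text{in }B_R,\qquad u=\phi\quad\text{on }\partial B_R,
\]
and produce a continuous viscosity solution $u_R\geq\phi$ on $\overline{B_R}$ by Perron's method, using $\phi$ as lower barrier and a suitable radial profile blowing up near $\partial B_R$ (allowed by the superquadratic structure of $(1/m)|Du|^m$) as upper barrier. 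Theorem \ref{CLP}(i) applied to $u_R$ on any $B_{R'}\subset B_R$ yields $|u_R(x)-u_R(y)|\leq M_{R'}|x-y|^\al$ for $x,y\in B_{R'}$, with $M_{R'}$ independent of $R$. Subtracting $u_R(0)$ to normalize and using Ascoli-Arzel\'a with a diagonal argument in $R'\uparrow\infty$, a subsequence of $u_R-u_R(0)$ converges locally uniformly to some $u\in C^{0,\al}(\R^N)$ with $u(0)=0$, and the stability of viscosity solutions shows that $u$ solves (\ref{EP}) on all of $\R^N$.

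To reach the borderline case $\lm=\lm_m$, I take $\lm_n\uparrow\lm_m$ together with the solutions $u_n$ just produced: Theorem \ref{CLP}(i) delivers H\"older bounds uniform in $n$ because the $\lm_n$ remain bounded, so another Ascoli-Arzel\'a extraction together with viscosity stability gives a solution at $\lm=\lm_m$. For the classical regularity when $f\in C^{0,1}(\R^N)$, Theorem \ref{CLP}(ii) shows any continuous solution $u$ is locally Lipschitz, so $(1/m)|Du|^m\in L^\infty_{loc}(\R^N)$. Rewriting the equation as $\De u=\lm-f+(1/m)|Du|^m$ puts its right-hand side in $L^\infty_{loc}$, so interior $W^{2,p}$ estimates for every $p<\infty$ give $u\in W^{2,p}_{loc}\subset C^{1,\gm}_{loc}$ for any $\gm\in(0,1)$, whence $(1/m)|Du|^m\in C^{0,\gm}_{loc}$, and Schauder estimates then upgrade $u$ to $C^{2,\gm}_{loc}$, in particular $u\in C^2(\R^N)$.

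The delicate step I anticipate is the existence of $u_R$: for superquadratic equations the prescribed finite boundary datum $\phi$ may fail to be attained in a Lipschitz sense, so the Perron construction must be combined either with a state-constraint analysis near $\partial B_R$ or with an explicit upper barrier of the form $\phi+C(R-|x|)^{-\sigma}$ that balances the $(1/m)|Du|^m$ term and keeps the construction in a comparison-friendly regime. Once $u_R$ has been produced as a continuous viscosity (sub)solution dominating $\phi$ in $B_R$, the uniform estimates of Theorem \ref{CLP} make the rest of the argument routine.
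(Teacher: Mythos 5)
Your overall architecture matches the paper's: solve Dirichlet problems on balls $B_R$, use the $m$- and $R$-uniform interior estimates of Theorem \ref{CLP} to extract a locally uniform limit, handle $\lm=\lm_m$ by a second limit $\lm_n\uparrow\lm_m$, and (in the Lipschitz case) upgrade regularity. Your bootstrap to $C^2$ via Theorem \ref{CLP}(ii), $W^{2,p}_{loc}$ and Schauder is fine, and your two limiting arguments are essentially the ones in the paper.

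The genuine gap is exactly the step you flag and then leave unresolved: the existence of $u_R$ on $B_R$. For superquadratic Hamiltonians a Perron construction with boundary datum $\phi$ is not routine. First, Perron's method needs a comparison principle for $\lm-\De u+\frac1m|Du|^m-f=0$ on $B_R$, which has no strict monotonicity in $u$ and which you do not establish. Second, the loss-of-boundary-condition phenomenon is not a technicality here: subsolutions obey the universal interior H\"older bound of \cite{CLP10} up to scaling, so the Dirichlet datum may simply fail to be attained, and an upper barrier blowing up at $\partial B_R$ produces at best a state-constraint or generalized-Dirichlet solution, whose interior equation must then still be justified. The paper sidesteps all of this: it first reduces to $f\in C^{0,1}(\R^N)$ via the stability of $\lm_m$ in $f$ (Proposition \ref{staf}), builds a global \emph{smooth} subsolution $u_-$ by inf-convolution and mollification (Proposition \ref{Cinfty}), and then invokes Lions' classical solvability theorem \cite[Th\'eor\`eme I.1]{Li80} to get $u_R\in C^{2,\gm}(\overline{B}_R)$ with boundary datum $u_-$; the general $f\in C_0(\R^N)$ case is then recovered by approximating $f$ uniformly by smooth functions. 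If you want to keep your direct route, you must either prove the comparison principle and carry out the barrier/state-constraint analysis you sketch, or replace the Perron step by the quotable classical existence result — which requires the smooth subsolution, i.e.\ Proposition \ref{Cinfty}, as input.
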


\begin{proof}
We first prove the latter claim. Let $f\in C^{0,1}(\R^N)$ and fix any $\lm<\lm_m$. Then, by virtue of Proposition \ref{Cinfty}, there exists a classical subsolution $u_-\in C^\infty(\R^N)$ of (\ref{EP}).  
Fix any $R>0$ and consider the Dirichlet problem
\begin{equation}\label{D}
\lm-\De u+\dfrac1m|Du|^m -f=0\quad \text{in }\ B_R,\qquad u=u_-\quad \text{on }\ \partial B_R,
\end{equation}
where $\partial B_R:=\{x\in\R^N\,|\, |x|=R\}$.
Then it is known (e.g. \cite[Th\'{e}or\`{e}me I.1]{Li80}) that there exists a unique classical solution $u_R\in C^{2,\gm}(\overline{B}_R)$ of (\ref{D}) for some $\gm\in(0,1)$. By virtue of Theorem \ref{CLP} together with the Schauder estimate, we see that $\{u_R-u_R(0)\}_{R>0}$ is pre-compact in $C^2(\R^N)$. In particular, letting $R\to \infty$ along a suitable subsequence $\{R_j\}$ if necessary, we see that $\{u_{R_j}\}$ and their first and second derivatives converge as $j\to\infty$ to a function $u\in C^2(\R^N)$ and its corresponding derivatives, respectively, locally uniformly in $\R^N$, and that $u$ is a classical solution of (\ref{EP}). In order to see that (\ref{EP}) with $\lm=\lm_m$ has a classical solution, we choose any sequence $\{\lm^{(n)}\}$ such that $\lm^{(n)}\to \lm_m$ as $n\to\infty$, and let $u^{(n)}$ denote the associated classical solution to (\ref{EP}) with $\lm=\lm^{(n)}$. Then one can see, similarly as above, that $\{u^{(n)}-u^{(n)}(0)\}$ is pre-compact in $C^2(\R^N)$. Passing to the limit as $n\to\infty$ along a suitable subsequence if necessary, we conclude that (\ref{EP}) with $\lm=\lm_m$ has a classical solution.    

We now prove the former claim. Fix any $f\in C_0(\R^N)$ and choose a sequence $\{f_n\}\subset C^{\infty}(\R^N)\cap C_0(\R^N)$ which converges as $n\to\infty$ to $f$ uniformly in $\R^N$. Let $\lm^{(n)}$ be the generalized principal eigenvalue of (\ref{EP}) with $f_n$ in place of $f$. Then, in view of Proposition \ref{staf}, we observe that $\lm^{(n)}\to \lm_m$ as $n\to\infty$. Now, fix any $\lm<\lm_m$. We may assume without loss of generality that $\lm<\lm^{(n)}$ for any $n\geq 1$. For each $n\geq 1$, let $u^{(n)}\in C^2(\R^N)$ denote a classical solution of (\ref{EP}) with $f_n$ in place of $f$. Then, by Theorem \ref{CLP} and the stability of viscosity solutions, we conclude that, along a suitable subsequence,  $\{u^{(n)}\}$ converges as $n\to\infty$ to  a viscosity solution $u\in C^{0,\al}(\R^N)$ of (\ref{EP}) locally uniformly in $\R^N$. We can also construct a viscosity solution of (\ref{EP}) with $\lm=\lm_m$ similarly as in the previous case. Hence, we have completed the proof. 
\end{proof}

Theorem \ref{exist} implies that the following representation formula for $\lm_m$ holds:
\begin{equation*}
\lm_m=\max\{\lm\in \R\,|\, \text{(\ref{EP}) has a viscosity solution } u\in C^{0,\al} (\R^N) \}.
\end{equation*}
Furthermore, if $f\in C^{0,1}(\R^N)$, then 
\begin{align*}
\lm_m=\max\{\lm\in \R\,|\, \text{(\ref{EP}) has a classical solution } u\in C^{2} (\R^N) \}.
\end{align*}

\section{Convergence as $m\to\infty$}

This section is devoted to the convergence of $\lm_m$ as $m\to\infty$. To be precise, we rewrite the limiting equation
\begin{equation}\label{EPinf}
\max\big\{\lm-\Delta u-f,|Du| -1\big\}=0\quad\text{in }\ \R^N,\qquad u(0)=0,
\end{equation}
and redefine the generalized principal eigenvalue of (\ref{EPinf}) by  
\begin{equation}\label{lminf}
\lm_\infty:=\sup\{\lm\in \R\,|\, \text{(\ref{EPinf}) has a viscosity subsolution } u\in C^{0,1}(\R^N) \}.
\end{equation}

The following result is crucial to our convergence result.
\begin{prop}\label{sta}
Let $\{m_k\}\subset \R$ be an increasing sequence such that $m_k\to\infty $ as $k\to\infty$. Let $(\lm_{m_k},u_k)$ be a solution of (\ref{EP}) with $m=m_k$ for each $k$. Suppose that $\lambda_k$ converges to some $\lm\in\R$ as $k\to\infty$. Then, up to a subsequence, $\{u_k\}$ converges  as $k\to\infty$ to a function $u\in C^{0,1}(\R^N)$ locally uniformly in $\R^N$. Moreover, $(\lm,u)$ is a solution of (\ref{EPinf}). 
\end{prop}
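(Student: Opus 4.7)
The plan is to combine the uniform H\"older estimate from Theorem \ref{CLP} with a standard viscosity-solution stability argument along half-relaxed limits, exploiting the fact that $\alpha_k := (m_k-2)/(m_k-1) \to 1$ and that the Hamiltonian $(1/m)|p|^m$ degenerates to the constraint $|p| \leq 1$ in the limit.

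First I would establish compactness and Lipschitz regularity of the limit. By Theorem \ref{CLP}(i), each $u_k$ satisfies $|u_k(x) - u_k(y)| \leq M_R |x-y|^{\al_k}$ on $B_R$, where $M_R$ depends only on $R$ and $\max_{B_R}|f-\lm_{m_k}|$ but not on $m_k$. Since $\lm_{m_k}$ is convergent, hence bounded, we may take $M_R$ uniform in $k$. Combined with the normalization $u_k(0) = 0$, the family $\{u_k\}$ is locally equibounded and equi-H\"older with exponents tending to $1$. Arzel\`a--Ascoli extracts a subsequence converging locally uniformly to some $u \in C(\R^N)$ with $u(0) = 0$, and letting $k \to \infty$ in the H\"older estimate (using $\al_k \to 1$) yields $|u(x) - u(y)| \leq M_R |x-y|$ on $B_R$, so $u \in C^{0,1}(\R^N)$.

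Next I would verify that $u$ is a viscosity subsolution of (\ref{EPinf}). Let $\phi \in C^2(\R^N)$ and assume that $x_0$ is a strict local maximum of $u-\phi$. Standard arguments provide $x_k \to x_0$ at which $u_k - \phi$ attains a local maximum, and the subsolution inequality for $u_k$ reads
\[
\lm_{m_k} - \De\phi(x_k) + \frac{1}{m_k}|D\phi(x_k)|^{m_k} - f(x_k) \leq 0.
\]
Dropping the nonnegative gradient term and passing to the limit by continuity of $\phi$ and $f$ yields $\lm - \De\phi(x_0) - f(x_0) \leq 0$. On the other hand, rearranging gives $|D\phi(x_k)|^{m_k} \leq C m_k$ for a constant $C$ independent of $k$, so $|D\phi(x_k)| \leq (Cm_k)^{1/m_k} \to 1$, whence $|D\phi(x_0)| \leq 1$. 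Together these give $\max\{\lm - \De\phi(x_0) - f(x_0),\, |D\phi(x_0)| - 1\} \leq 0$.

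For the supersolution property, let $x_0$ be a strict local minimum of $u-\phi$ and obtain analogous minimizers $x_k \to x_0$ for $u_k-\phi$, at which
\[
\lm_{m_k} - \De\phi(x_k) + \frac{1}{m_k}|D\phi(x_k)|^{m_k} - f(x_k) \geq 0.
\]
If $|D\phi(x_0)| \geq 1$, the constraint term is nonnegative and the $\max$ is immediately $\geq 0$. If $|D\phi(x_0)| < 1$, pick $\eta > 0$ so that $|D\phi(x_k)| \leq 1-\eta$ for large $k$; then $(1/m_k)|D\phi(x_k)|^{m_k} \leq (1-\eta)^{m_k}/m_k \to 0$, and the limit gives $\lm - \De\phi(x_0) - f(x_0) \geq 0$. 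In either case, the supersolution inequality holds. The main subtlety I would double-check is the $m$-independence of the constants $M_R$ in Theorem \ref{CLP}, which is what makes $\al_k \to 1$ usable to upgrade H\"older regularity to Lipschitz; once that is granted, the rest is the standard mechanism by which the gradient constraint $|Du|\leq 1$ emerges as the vanishing-$1/m$ limit of $(1/m)|Du|^m$.
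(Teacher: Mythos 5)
Your proposal is correct and follows essentially the same route as the paper: uniform-in-$m$ H\"older constants from Theorem \ref{CLP}(i) plus $\al_k\to1$ give local uniform convergence to a Lipschitz limit, and the gradient constraint emerges from the blow-up of $(1/m_k)|D\phi(x_k)|^{m_k}$. The only cosmetic difference is that you derive $|D\phi(x_0)|\leq 1$ directly from $|D\phi(x_k)|\leq (Cm_k)^{1/m_k}\to 1$, whereas the paper argues by contradiction assuming $|D\phi(x_0)|>1$; both are valid.
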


\begin{proof}
In view of Theorem \ref{CLP} (i), we see that there exist a subsequence of  $\{u_k\}$, which we denote by $\{u_k\}$ again, and a function $u\in C(\R^N)$ with $u(0)=0$ such that $u_k\to u$ as $k\to \infty$ locally uniformly in $\R^N$. Note that $u\in C^{0,1}(\R^N)$ since the constant $M_R$ in Theorem \ref{CLP} (i) does not depend on any large $m>2$. 

We now verify that $u$ is a viscosity solution of (\ref{EPinf}). We first prove the subsolution property. Fix any $x_0\in \R^N$ and let $\phi\in C^2(\R^N)$ be any function such that $\max_{\R^N}(u-\phi)=(u-\phi)(x_0)$. As is standard, one can assume that the maximum is strict, so that there exists a sequence $\{x_k\}\subset \R^N$ such that $u_k-\phi$ attains its local  maximum at $x_k$ and $x_k\to x_0$ as $k\to\infty$. Then, by the subsolution property of $u_k$, we see that 
\begin{equation}\label{eqm}
\lm_{m_k}-\De\phi(x_k)+\frac{1}{m_k}|D\phi(x_k)|^{m_k}-f(x_k)\leq 0.
\end{equation}
We now suppose that $|D\phi(x_0)|>1$. Then there exists an $\eta>0$ such that $|D\phi(x_k)|\geq 1+\eta$ for all sufficiently large $k$. In particular, we have
\begin{equation*}
\frac{1}{m_k}(1+\eta)^{m_k}\leq -\lm_{m_k}+\De\phi(x_k)+f(x_k).
\end{equation*}
Sending $k\to\infty$, we get a contradiction since the right-hand side remains bounded, whereas the left-hand side goes to infinity as $k\to\infty$. Hence, we have $|D\phi(x_0)|\leq 1$. Furthermore, letting $k\to\infty$ in (\ref{eqm}), we conclude that $\lm-\De\phi(x_0)-f(x_0)\leq0$, which implies that $u$ is a viscosity subsolution of (\ref{EPinf}).

We next prove the supersolution property. Fix any $x_0\in \R^N$ and let $\psi\in C^2(\R^N)$ be such that $\min_{\R^N}(u-\psi)=(u-\psi)(x_0)$. If $|D\psi(x_0)|\geq1$, then there is nothing to prove, so we assume that $|D\psi(x_0)|<1$. In particular, there exists some $\eta>0$ such that $|D\psi(x_k)|\leq 1-\eta$ for all sufficiently large $k$. Furthermore, there exists a sequence $\{x_k\}\subset \R^N$ such that $u_k-\psi$ attains its local minimum at $x_k$ and $x_k\to x_0$ as $k\to\infty$. Then, by the supersolution property of $u_k$, we have
\begin{equation*}
\lm_{m_k}-\De\psi(x_k)+\frac{1}{m_k}|D\psi(x_k)|^{m_k}-f(x_k)\geq 0.
\end{equation*}
Letting $k\to\infty$ in the above inequality, we obtain $\lm-\De\psi(x_0)-f(x_0)\geq0$. Hence, we conclude that $u$ is a viscosity supersolution of (\ref{EPinf}).
\end{proof}

We are now in position to state the main result of this section.
\begin{thm}\label{conv}
Let $\lm_m$ and $\lm_\infty$ be the generalized principal eigenvalues of (\ref{EP}) and (\ref{EPinf}), respectively. Then, $\lm_m$ converges to $\lm_\infty$ as $m\to\infty$. Moreover, equation (\ref{EPinf}) with $\lm=\lm_\infty$ has a viscosity solution $u\in C^{0,1}(\R^N)$.
\end{thm}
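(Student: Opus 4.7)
The plan is to prove the two inequalities $\limsup_{m\to\infty}\lm_m\leq\lm_\infty$ and $\liminf_{m\to\infty}\lm_m\geq\lm_\infty$ separately, and then to extract a solution at $\lm=\lm_\infty$ by a last invocation of Proposition \ref{sta}. Since $\inf_{\R^N}f\leq \lm_m\leq 0$ by Proposition \ref{upp} and the remark following (\ref{lm}), the sequence $\{\lm_m\}$ is bounded, so these subsequential quantities are finite and subsequential-limit arguments make sense.

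For the upper bound, I would extract an increasing subsequence $m_k\to\infty$ realizing $\lm_{m_k}\to \lm^{\ast}:=\limsup_{m\to\infty}\lm_m$. Theorem \ref{exist} furnishes, for each $k$, a viscosity solution $u_k$ of (\ref{EP}) associated with $(m,\lm)=(m_k,\lm_{m_k})$. Proposition \ref{sta} then yields, up to a further subsequence, a limit $u\in C^{0,1}(\R^N)$ such that $(\lm^{\ast},u)$ solves (\ref{EPinf}). In particular $u$ is a subsolution, so $\lm^{\ast}\leq \lm_\infty$ by the definition (\ref{lminf}).

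For the lower bound, I fix $\lm<\lm_\infty$ arbitrarily and select a Lipschitz viscosity subsolution $u\in C^{0,1}(\R^N)$ of (\ref{EPinf}), provided by (\ref{lminf}). The key observation is that the same $u$ is a viscosity subsolution of (\ref{EP}) with $\lm-1/m$ in place of $\lm$, for every $m>2$. Indeed, if $\phi\in C^2(\R^N)$ attains a local maximum of $u-\phi$ at $x_0$, the subsolution property for (\ref{EPinf}) forces simultaneously $\lm-\De\phi(x_0)-f(x_0)\leq 0$ and $|D\phi(x_0)|\leq 1$, so that
\[
(\lm-1/m)-\De\phi(x_0)+\frac{1}{m}|D\phi(x_0)|^m-f(x_0)\leq -\frac{1}{m}+\frac{1}{m}=0.
\]
Consequently $\lm-1/m\leq \lm_m$ for every $m>2$, and letting $m\to\infty$ gives $\lm\leq \liminf_{m\to\infty}\lm_m$. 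Taking the supremum over $\lm<\lm_\infty$ yields $\lm_\infty\leq \liminf_{m\to\infty}\lm_m$, which closes the loop: $\lm_m\to\lm_\infty$.

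To obtain a viscosity solution at $\lm=\lm_\infty$, I would choose any sequence $m_k\to\infty$, solve (\ref{EP}) with $(m,\lm)=(m_k,\lm_{m_k})$ by Theorem \ref{exist}, and apply Proposition \ref{sta} once more to extract a locally uniform limit $u\in C^{0,1}(\R^N)$ solving (\ref{EPinf}) with $\lm=\lm_\infty$. The main delicate step is the lower-bound argument, specifically the careful manipulation of the viscosity inequalities to extract the quantitative defect $1/m$ that converts a subsolution of the max-equation into a subsolution of the viscous Hamilton-Jacobi problem; by contrast, the upper bound and the final existence assertion are essentially routine applications of the stability result already in hand.
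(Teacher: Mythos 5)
Your proposal is correct and follows essentially the same route as the paper: the upper bound via Proposition \ref{sta} applied to solutions at $\lm=\lm_{m_k}$, the lower bound by checking that a Lipschitz subsolution of (\ref{EPinf}) is a viscosity subsolution of (\ref{EP}) with the $1/m$ defect, and the final existence by one more application of Proposition \ref{sta}. No gaps.
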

\begin{proof}
Set $\overline{\lm}:=\limsup_{m\to\infty}\lm_m$. 
Note that $\overline{\lm}\leq 0$ in view of Proposition \ref{upp}. Let $(\lm_{m_k},u_{m_k})$ be a sequence of solutions to (\ref{EP}) with $m=m_k$ such that $\lm_{m_k}\to \overline{\lm}$ as $k\to\infty$. Then, by taking a subsequence if necessary, we see from  Proposition \ref{sta} that $\{u_{m_k}\}$ converges to a viscosity solution $u\in C^{0,1}(\R^N)$ of (\ref{EPinf}) locally uniformly in $\R^N$. In particular, $\overline{\lm}\leq \lm_\infty$.

To prove the reverse inequality, we set $\underline{\lm}:=\liminf_{m\to\infty}\lm_m$. Fix any $\ep>0$ and let $u\in C^{0,1}(\R^N)$ be a viscosity subsolution of (\ref{EPinf}) with $\lm=\lm_\infty-\ep$. Then, noting that $|Du|\leq 1$ in $\R^N$ in the viscosity sense, we see that, for any $m>2$, $u$ is a viscosity subsolution of 
\begin{equation*}
\lm_\infty-\ep-\frac1m-\De u+\frac1m|Du|^m-f\leq 0\quad \text{in }\R^N.
\end{equation*}
This implies $\lm_\infty-\ep-1/m\leq \lm_m$ for any $m>2$, so that $\lm_\infty-\ep\leq \underline{\lm}$. Since $\ep>0$ is arbitrary, we obtain $\lm_\infty\leq \underline{\lm}\leq \overline{\lm}\leq \lm_\infty$. Hence, we have completed the proof. 
\end{proof}

The next result states that Proposition \ref{Cinfty} remains valid for $m=\infty$.

\begin{prop}\label{C11}
Suppose that $f\in C^{0,1}(\R^N)$. Then, for any $\lm< \lm_\infty$, there exists a classical subsolution $u\in C^{\infty}(\R^N)$ of (\ref{EPinf}). In particular, 
\begin{equation*}
\lm_{\infty}=\sup\{\lm\in \R\,|\, \text{(\ref{EPinf}) has a classical subsolution $u\in C^{\infty}(\R^N)$} \}.
\end{equation*}
\end{prop}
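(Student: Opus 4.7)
The plan is to adapt the inf-convolution plus mollification scheme of Proposition \ref{Cinfty}. The gradient-constrained Hamiltonian in (\ref{EPinf}) remains convex in $(Du, D^2 u)$, since both $\lm - \De u - f$ and $|Du| - 1$ are convex in the gradient and Hessian, and this convexity is precisely what will allow a smooth convolution of a viscosity subsolution to remain a subsolution.

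The key steps are as follows. First, I would establish the sup-norm stability $|\lm_\infty(f) - \lm_\infty(g)| \leq \max_{\R^N}|f-g|$ by a trivial eigenvalue-shift argument analogous to Proposition \ref{staf}: if $u$ is a viscosity subsolution of $\max\{\lm - \De u - g, |Du| - 1\} \leq 0$, then $u$ is a viscosity subsolution of the equation with $f$ and eigenvalue $\lm - \max_{\R^N}(f-g)_+$. Next, set $f_\ep(x) := \min_{|e|\leq \ep} f(x+e)$, which lies in $C^{0,1}(\R^N) \cap C_0(\R^N)$, satisfies $f_\ep \leq f$, and converges to $f$ uniformly as $\ep \to 0$. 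For any $\lm < \lm_\infty$, the stability estimate gives $\lm < \lm_\infty(f_\ep)$ for $\ep$ sufficiently small, and the definition of $\lm_\infty(f_\ep)$ yields a viscosity subsolution $u^{(\ep)} \in C^{0,1}(\R^N)$ of $\max\{\lm - \De u - f_\ep, |Du| - 1\} \leq 0$; note that $u^{(\ep)}$ is automatically $1$-Lipschitz. Since $f_\ep(\,\cdot\, - e) \leq f$ for $|e| \leq \ep$, each translate $u^{(\ep)}(\,\cdot\, - e)$ is again a viscosity subsolution of the target equation with $f$. Mollifying, $u := u^{(\ep)} \ast \rho_\de$ with $\de \leq \ep$ is smooth and, as a convex combination of $1$-Lipschitz translates, still $1$-Lipschitz, so $|Du| \leq 1$ holds pointwise. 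The remaining inequality $\lm - \De u - f \leq 0$ follows from the convex-convolution argument of \cite[Lemma 2.7]{BJ02}, applied to the convex Hamiltonian $(p, X) \mapsto \max\{\lm - \mathrm{tr}(X) - f(x), |p| - 1\}$; since $u$ is smooth, it is a classical subsolution.

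The main obstacle I foresee is justifying the Barles--Jakobsen convex-convolution lemma for the non-smooth gradient-constrained Hamiltonian rather than the smooth power nonlinearity used in Proposition \ref{Cinfty}. I expect no genuine difficulty: only the convexity in $(p, X)$ is used in the Jensen-type argument, and no further regularity of the Hamiltonian is needed. If one prefers to bypass \cite{BJ02} altogether, the two components of the max can be handled separately: $|Du| \leq 1$ comes from the $1$-Lipschitz bound on $u$, while $-\De u \leq f - \lm$ is a linear semi-elliptic inequality for which viscosity and distributional subsolutions coincide, so direct convolution combined with $f_\ep \ast \rho_\de \leq f$ closes the argument.
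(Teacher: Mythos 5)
Your proof is correct, but it takes a genuinely different route from the paper's. The paper does not redo the inf-convolution scheme at $m=\infty$; it instead exploits the machinery already built for finite $m$: it picks $\lm_{m_k}\to\lm_\infty$ (Theorem \ref{conv}), takes \emph{classical} solutions $u^{(k)}\in C^2(\R^N)$ of (\ref{EP}) with $m=m_k$ and the fixed $\lm<\lm_{m_k}$ (Theorem \ref{exist}), mollifies these classical solutions --- so the Jensen/convexity step is elementary and no viscosity-theoretic convolution lemma is needed --- drops the nonnegative gradient term to get $\lm-\De u^{(k)}_\de-f\leq 0$ classically (after absorbing the $f$ versus $f\ast\rho_\de$ discrepancy into $\lm_{m_k}-\lm$), and then lets $k\to\infty$, using stability, smoothness of the limit $u\ast\rho_\de$, and the a.e.\ gradient bound inherited from Proposition \ref{sta}. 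Your version transplants the Barles--Jakobsen/Krylov argument of Proposition \ref{Cinfty} directly to the constrained equation: it is self-contained at $m=\infty$ (it needs neither Theorem \ref{conv} nor the classical solvability of (\ref{EP})), at the price of supplying two auxiliary facts the paper sidesteps. First, the stability $|\lm_\infty(f)-\lm_\infty(g)|\leq\max_{\R^N}|f-g|$ for the constrained eigenvalue; this is routine, but note that the shift when passing from a subsolution for $g$ to one for $f$ is $\lm-\max_{\R^N}(g-f)_+$, not $\lm-\max_{\R^N}(f-g)_+$ as you wrote (take $g\equiv1$, $f\equiv0$ to see your version fail); the two-sided estimate is unaffected. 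Second, that mollification preserves the subsolution property: either via convexity of $(p,X)\mapsto\max\{\lm-\mathrm{tr}(X)-f(x),|p|-1\}$ in the Barles--Jakobsen lemma, or, as you observe, by splitting the max --- the constraint $|Du|\leq1$ survives because averages of $1$-Lipschitz translates are $1$-Lipschitz, and the linear inequality $-\De v\leq f-\lm$ survives because viscosity and distributional subsolutions coincide for it and $f_\ep\ast\rho_\de\leq f$. The split version is the cleaner closure, since it avoids invoking \cite{BJ02} for a nonsmooth Hamiltonian; the paper's route is shorter given that Sections 2--3 are already in place.
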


\begin{proof}
Fix any $\lm<\lm_{\infty}$, and let $\{\rho_\de\}_{\de>0}\subset C^\infty_c(\R^N)$ be  such that $\rho_\de\geq 0$ in $\R^N$,  $\int_{\R^N}\rho_\de(x)dx=1$, and $\supp\rho_\de\subset B_\de$ for all $\de>0$. Let $\{\lm_{m_k}\}$ be a sequence of generalized principal eigenvalues of (\ref{EP}) with $m=m_k$ such that $\lm_{m_k}\to \lm_{\infty}$ as $k\to\infty$. Such a sequence exists by virtue of Theorem \ref{conv}. In what follows, we assume that $\lm<\lm_{m_k}$ for all $k\geq 1$. For each $k\geq 1$, let $u^{(k)}\in C^2(\R^N)$ be a classical solution of (\ref{EP}) with $m=m_k$ (and the common $\lm$). Taking a subsequence if necessary, one may also assume that $\{u^{(k)}\}$ converges  as $k\to\infty$ to a viscosity solution $u\in C^{0,1}(\R^N)$ of (\ref{EPinf}) locally uniformly in $\R^N$.  

Now we set $u^{(k)}_{\de}:=u^{(k)}\ast \rho_\de$, $u_{\de}:=u\ast \rho_\de$,  and $f_{\de}:=f\ast \rho_\de$, where $\ast$ stands for the usual convolution. We choose $\de>0$ so small that $\sup_{\R^N}| f_\de- f|<\lm_{m_k}-\lm$ for all $k\geq 1$. Then, since $u^{(k)}$ is a classical solution of (\ref{EP}) with $m=m_k$, we see that $u^{(k)}_{\de}$ enjoys the inequality
\begin{equation*}
\lm-\De u^{(k)}_{\de}+\frac{1}{m_k}|Du^{(k)}_{\de}|^{m_k}-f\leq 0 \quad \text{in }\ \R^N
\end{equation*}
for all $k\geq 1$ and any sufficiently small $\de>0$. 
This implies that  $u^{(k)}_{\de}$ is also a classical subsolution of 
\begin{equation*}
\lm-\De u^{(k)}_{\de}-f\leq 0 \quad \text{in }\ \R^N.
\end{equation*}
Letting $k\to\infty$ and noting the stability of viscosity solutions, we conclude that $u_{\de}$ is a smooth viscosity subsolution, and therefore, a classical subsolution of the same equation. On the other hand, since $|u|\leq 1$ a.e. in $\R^N$, which can be verified as in the proof of Proposition \ref{sta}, we see that $|u_\de|\leq 1$ in $\R^N$. Hence, $u_{\de}$ enjoys (\ref{EPinf}) at every point $x\in\R^N$, and we have completed the proof. 
\end{proof}

\begin{rem}
The first claim of Theorem \ref{exist} remains true for $m=\infty$. Namely, for any $\lm\leq \lm_\infty$, there exists a viscosity solution $u\in C^{0,1}(\R^N)$ of (\ref{EPinf}). To see this, fix any $\lm<\lm_\infty$ and choose an $m_0$ so large that $\lm_m>\lm$ for any $m>m_0$. Let $u_m$, for $m>m_0$, be a viscosity solution of (\ref{EP}). Then, by Proposition \ref{sta}, we conclude that, along a subsequence, $\{u_m\}$ converges to a viscosity solution $u\in C^{0,1}(\R^N)$ of (\ref{EPinf}). The existence of a viscosity solution $u$ to (\ref{EPinf}) with $\lm=\lm_\infty$ has been proved in Theorem \ref{conv}. Hence, the first claim of Theorem \ref{exist} is also valid for $m=\infty$. We do not know if the second claim remains true for $m=\infty$. 
   
\end{rem}

\section{Qualitative properties}  
In this section, we introduce real parameter $\be$ and consider the ergodic problem for $m>2$:
\begin{equation}\label{EPmb}
\lm-\De u+\frac1m |Du|^m-\be f=0\quad \text{in }\ \R^N,\quad u(0)=0,
\end{equation}
and its limiting equation as $m\to\infty$:
\begin{equation}\label{EPinfb}
\max\{\lm-\De u-\be f,|Du|-1\}=0\quad \text{in }\ \R^N,\quad u(0)=0.
\end{equation}
In the rest of this paper, we impose the following assumption on $f$ in addition to (A1): 
\vskip0.2cm
\noindent
{\bf (A2)} \ $f\not\equiv0$ and $|f(x)|\leq C_0\langle x\rangle^{-m^\ast}$ in $\R^N$ for some $C_0>0$, where $\langle x\rangle:=(1+|x|^2)^{1/2}$ and $m^\ast:=m/(m-1)$ with the convention that $m^\ast:=1$ for $m=\infty$.
\vskip0.2cm
 
Let $\lm_{m,\be}$ and $\lm_{\infty,\be}$ be the generalized principal eigenvalues of (\ref{EPmb}) and (\ref{EPinfb}), respectively. In view of Proposition \ref{upp} and Theorem \ref{conv}, we observe that $\lm_{m,\be}\leq 0$ for any $\be\in\R$ and $2<m\leq \infty$. It is also easy to see that $\lm_{m,0}=0$  for any $2<m\leq \infty$. Furthermore, we have the following.

\begin{prop}\label{betac}
Let $2<m\leq \infty$. If $f_-\not\equiv0$, then $\lm_{m,\be}\to -\infty$ as $\be\to\infty$, and if $f_-\equiv0$, then $\lm_{m,\be}=0$ for any $\be>0$. Symmetrically, if $f_+\not\equiv0$, then $\lm_{m,\be}\to -\infty$ as $\be\to-\infty$, and if $f_+\equiv0$, then $\lm_{m,\be}=0$ for any $\be<0$. 
\end{prop}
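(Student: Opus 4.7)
The proof naturally splits by sign: when $\be f\geq 0$ pointwise (the degenerate sub-cases $f_-\equiv 0,\ \be\geq 0$ and $f_+\equiv 0,\ \be\leq 0$), monotonicity from Proposition \ref{cd}(i) immediately yields $\lm_{m,\be}=\lm_m(\be f)\geq\lm_m(0)=0$; combined with the universal upper bound $\lm_{m,\be}\leq 0$ coming from Proposition \ref{upp} for $m<\infty$ and from its limit via Theorem \ref{conv} for $m=\infty$, this gives $\lm_{m,\be}=0$ as desired.

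For the blow-down claims, I would quantify the test-function argument used in the proof of Proposition \ref{upp}. Fix $\be>0$, the case $\be<0$ being symmetric. Assuming first $f\in C^{0,1}(\R^N)$, Proposition \ref{Cinfty} (resp.\ Proposition \ref{C11}) produces a smooth subsolution $u$ of (\ref{EPmb}) (resp.\ (\ref{EPinfb})) for every $\lm<\lm_{m,\be}$; integrating the inequality $\lm-\De u+\tfrac1m|Du|^m\leq\be f$ against a nonnegative $\eta^{m^\ast}$ with $\eta\in C_c^\infty(\R^N)$ and $\int\eta^{m^\ast}\,dx=1$, then absorbing the cross term via Young's inequality exactly as in the cited proof, yields
\[
\lm \leq \be\int_{\R^N}f\,\eta^{m^\ast}\,dx + C(m^\ast,\eta),
\]
with $C(m^\ast,\eta)$ bounded uniformly as $m$ ranges over $(2,\infty]$. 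Since $f_-\not\equiv 0$, continuity provides a ball $B_r(x_0)$ and a constant $c>0$ with $f\leq -c$ on $B_r(x_0)$; choosing $\eta\in C_c^\infty(B_r(x_0))$ with $\int\eta^{m^\ast}\,dx=1$ forces $\int f\,\eta^{m^\ast}\,dx\leq -c$, whence
\[
\lm_{m,\be}\leq -c\be + C(m^\ast,\eta)\ \longrightarrow\ -\infty \qquad\text{as }\be\to+\infty.
\]
The general case $f\in C_0(\R^N)$ is recovered by mollifying $f_\ep\to f$ uniformly, applying the above bound to $\be f_\ep$, and invoking the stability result Proposition \ref{staf} to pass to the limit $\ep\to 0$ in the inequality (with $\eta$ fixed). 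The case $m=\infty$ can be run directly with $m^\ast=1$, using the viscosity constraint $|Du|\leq 1$ to bound the gradient term in the integration by parts by $\int|D\eta|\,dx$; alternatively, one extracts it by sending $m\to\infty$ in the finite-$m$ bound via Theorem \ref{conv}. The symmetric statement with $f_+\not\equiv 0$ and $\be\to-\infty$ is handled identically by localizing $\eta$ in a ball where $f\geq c>0$.

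The conceptual heart is that the energy identity underlying Proposition \ref{upp} is already sharp: the upper bound on $\lm$ becomes \emph{quantitative} in $\be$ as soon as the test function is localized where $\be f$ has a definite negative sign. I expect the only mild technical obstacles to be the $m$-uniformity of the constant $C(m^\ast,\eta)$ (so that the argument cleanly covers $m=\infty$) and the mollification step for non-Lipschitz $f$; both are routine.
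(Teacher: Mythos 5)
Your proposal is correct and follows essentially the same route as the paper's proof: reduce to $f\in C^{0,1}(\R^N)$ via Proposition \ref{staf}, test a smooth (sub)solution against $\eta^{m^\ast}$ localized where $f$ is strictly negative to obtain an upper bound for $\lm_{m,\be}$ that is linear and decreasing in $\be$, and treat $m=\infty$ by passing to the limit in that bound via Theorem \ref{conv} (the degenerate cases being immediate from $(\lm,u)=(0,0)$ being a subsolution together with $\lm_{m,\be}\leq 0$). The only cosmetic difference is that the paper tests an exact classical solution at $\lm=\lm_{m,\be}$ with $\eta$ supported in $\supp f_-$, whereas you test subsolutions for $\lm<\lm_{m,\be}$ on a ball where $f\leq -c$; both yield the same quantitative estimate.
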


\begin{proof}
We first consider the case where $2<m<\infty$. In view of Proposition \ref{staf}, we may assume that $f\in C^{0,1}(\R^N)$. Suppose that $f_-\not\equiv0$, and choose any $\eta\in C_c^\infty(\R^N)$ such that $\eta\geq 0$ in $\R^N$, $\int_{\R^N}\eta(x)^{m^\ast}dx=1$, and $\supp \eta\subset \supp f_-$. Then, taking a classical solution $u\in C^{2}(\R^N)$ of (\ref{EPmb}) with $\lm=\lm_{m,\be}$, multiplying both sides of (\ref{EPmb}) by $\eta$, and applying integration by parts, we see as in the proof of Proposition \ref{upp} that

\begin{equation}\label{m}
\lm_{m,\be}\leq -\be \int_{\R^N}f_-(x)\eta(x)^{m^\ast}\, dx+\frac{1}{m^\ast}\int_{\R^N}|D\eta(x)|^{m^\ast}dx.
\end{equation}
Since the integral of $f_-\eta^{m^\ast} $ over $\R^N$ is strictly positive, we conclude that $\lm_{m,\be}\to -\infty$ as $\be\to \infty$. 
We now take the limit as $m\to\infty$ in (\ref{m}). Then, since $m^\ast\to 1$ as $m\to\infty$, we see from Theorem \ref{conv} that the claim is also valid for $m=\infty$. 

We now suppose that $f_-\equiv0$. Then, for any $\be>0$, the pair $(\lm,u)=(0,0)$ is a subsolution of (\ref{EPmb}) and (\ref{EPinfb}). This implies that $\lm_{m,\be}=0$ for any $2<m\leq \infty$ and $\be>0$. 
By choosing $-f$ and $-\be$ in place of $f$ and $\be$, respectively, we see that the latter claim of this proposition is also valid. Hence, we have completed the proof.
\end{proof}

From Propositions \ref{cd} (ii), \ref{upp}, and \ref{betac}, for each $2<m\leq \infty$, one can define $\be_-, \be_+$ by
\begin{equation*}
\be_+:=\max\{\be\in\R\,|\, \lm_{m,\be}=0\},\qquad
\be_-:=\min\{\be\in\R\,|\, \lm_{m,\be}=0\}.
\end{equation*}
Obviously, $-\infty\leq \be_-\leq 0\leq \be_+\leq\infty$, and $\be_+$ (resp. $\be_-$) is finite if and only if $f_-\not\equiv0$ (resp. $f_+\not\equiv 0$). Moreover, since $f\not \equiv 0$, either $\be_+$ or $\be_-$ is finite. 
As is mentioned in the introduction, we wish to know whether $0<|\be_\pm| \ (<\infty)$.  The main result of this section can be stated as follows.
\begin{thm}\label{main2}
Let $\be_+$ be defined as above, and let $f_-\not\equiv0$. \\
(i) \ Suppose that $N\geq 2$ and $2<m\leq \infty$. Then $\be_+>0$.\\
(ii) \ Suppose that $N=1$ and $2<m< \infty$. Then $\be_+=0$. \\
(iii) \ Suppose that $N=1$ and $m=\infty$. Then $\be_+>0$ provided $f_-\in L^1(\R)$.
\end{thm}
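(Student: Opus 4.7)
I would construct an explicit classical subsolution of (\ref{EPinfb}) with $\lm=0$ for small $\be$. Taking $F(y):=\int_{-\infty}^y f_-(z)\,dz$ (well-defined and bounded by $\|f_-\|_{L^1}$) and $u(x):=\be\int_0^x F(y)\,dy$ for $\be\in(0,\|f_-\|_{L^1}^{-1})$ gives $u\in C^2(\R)$, $u(0)=0$, and $|u'|=|\be F|\leq\be\|f_-\|_{L^1}<1$. Since $u''=\be f_-$ and $f_-+f=f_+\geq0$, one has $-u''-\be f=-\be f_+\leq0$; combined with $|u'|-1<0$ this yields $\max(-u''-\be f,|u'|-1)\leq0$, so $u$ is a classical (hence viscosity) subsolution with $\lm=0$. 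Together with $\lm_{\infty,\be}\leq0$ from Proposition \ref{upp} and Theorem \ref{conv} we conclude $\lm_{\infty,\be}=0$, whence $\be_+\geq\|f_-\|_{L^1}^{-1}>0$.

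\textbf{Plan for (i).} I would adapt the ansatz $u:=\be v-\be v(0)$, where $v$ is chosen from potential theory so that $-\Delta v=f$ (up to a nonlinear correction) and $\|Dv\|_{L^\infty}<\infty$. For $N\geq 3$, take $v$ the Newtonian potential of $f$; for $N=2$, use a renormalized logarithmic potential. The decay $|f|\leq C_0\langle x\rangle^{-m^\ast}$ from (A2) yields $\|Dv\|_{L^\infty}<\infty$ via the kernel bound $|Dv(x)|\leq C\int|f(y)|\,|x-y|^{1-N}dy$. For $m=\infty$ this works directly: $-\Delta u=\be f$ and $|Du|=\be|Dv|\leq 1$ provided $\be\leq\|Dv\|_{L^\infty}^{-1}$, giving a classical subsolution of (\ref{EPinfb}). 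For $2<m<\infty$, the extra $(1/m)|Du|^m=O(\be^m)$ must be absorbed; I would solve $-\Delta v+(\be^{m-1}/m)|Dv|^m=f$ in $\R^N$ via Banach fixed point on $\{w:\|Dw\|_{L^\infty}\leq 2\|Dv_0\|_{L^\infty}\}$ for $\be$ small (with $v_0$ the $\be=0$ potential), producing a classical solution of (\ref{EPmb}) with $\lm=0$. In both subcases $\be_+>0$.

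\textbf{Plan for (ii).} I would argue by contradiction: if $\be_+>0$, pick $\be\in(0,\be_+)$ and let $u\in C^2(\R)$ be a classical subsolution (via mollification and Proposition \ref{Cinfty}). Multiplying by $\eta^{m^\ast}$ with $\eta\in C_c^\infty(\R)$, $\eta\geq 0$, $\int\eta^{m^\ast}=1$, integrating by parts, and applying Young exactly as in the proofs of Propositions \ref{upp} and \ref{betac} gives
\[
0\leq\be\!\int_\R\! f\,\eta^{m^\ast}\,dx+C_{m^\ast}\!\int_\R\!|\eta'|^{m^\ast}\,dx.
\]
With the scaled family $\eta_L(x):=L^{-1/m^\ast}\phi(x/L)$ ($\phi\in C_c^\infty(\R)$, $\phi\geq 0$, $\int\phi^{m^\ast}=1$) and $f\in L^1(\R)$ (from (A2) with $m^\ast>1$ in 1D), dominated convergence gives $\int f\,\eta_L^{m^\ast}\sim L^{-1}\phi(0)^{m^\ast}\int f$ and $\int|\eta_L'|^{m^\ast}=L^{-m^\ast}\int|\phi'|^{m^\ast}$. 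When $\int f<0$, optimizing over $L$ yields $0\leq-c\be^m$, a contradiction. When $\int f=0$, integrating the PDE directly over $\R$ (using $u'\to 0$ at $\pm\infty$, which follows from the standard 1D blow-up analysis of $u''\geq(1/m)|u'|^m$ outside a compact set) forces $u$ constant, so $0\leq\be f$ fails on $\supp f_-$. The hardest subcase is $\int f>0$: one must combine a slowly varying global profile with a correction localized inside $\supp f_-$, exploiting the failure of a $W^{1,m^\ast}$-Poincar\'e inequality on all of $\R$ to make $\int|\eta'|^{m^\ast}/(-\int f\,\eta^{m^\ast})$ arbitrarily small---a phenomenon intrinsic to $N=1$ that is precisely what fails in $N\geq 2$ and thereby explains (i).
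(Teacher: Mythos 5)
Your plan for (iii) is correct and is essentially the paper's argument (Proposition \ref{L}): one integrates an antiderivative of $f_-$ so that $u''=\be f_-$, hence $-u''-\be f=-\be f_+\leq 0$, and checks the gradient constraint. The only difference is that the paper centers the antiderivative by adding a linear term $Cx$, which yields the sharper threshold $\be_+\geq 2/\|f_-\|_{L^1}$ (and a matching upper bound $2/K$), whereas your uncentered choice gives $1/\|f_-\|_{L^1}$; both suffice for $\be_+>0$.

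For (i) there is a genuine gap at $m=\infty$. Under (A2) with $m=\infty$ one only has $|f(x)|\leq C_0\langle x\rangle^{-1}$, and your kernel estimate $|Dv(x)|\leq C\int|f(y)|\,|x-y|^{1-N}dy$ then diverges: at $x=0$ the integral is comparable to $\int_0^\infty\langle r\rangle^{-1}dr=\infty$, and for non-radial $f$ saturating (A2) (e.g.\ $f\sim\langle x\rangle^{-1}$ supported in a cone) the gradient of the potential is genuinely unbounded, so ``this works directly'' is false. For $2<m<\infty$ your fixed-point scheme is plausible but needs more than stated: with only $\|Dw\|_{L^\infty}$ controlled, $D(-\De)^{-1}|Dw|^m$ need not be bounded, so the iteration must be closed in a weighted space recording the decay $|Dw(x)|\lesssim\langle x\rangle^{-1/(m-1)}$; and even then the smallness threshold on $\be$ is not obviously uniform in $m$, which is exactly what one needs to recover $m=\infty$ by a limit. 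The paper avoids all of this with the explicit radial barrier $u(x)=(K/\al)\langle x\rangle^{\al}$, $\al=(m-2)/(m-1)$, $K=(N-m^\ast)^{1/(m-1)}$ (Proposition \ref{be0}), which gives the lower bound $\be_+\geq(N-m^\ast)^{m^\ast}/(m^\ast C_0)$, uniform in large $m$, and then passes to $m=\infty$ via Theorem \ref{conv}.

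For (ii) the gap is more serious: the test-function method cannot prove the statement when $\int_\R f>0$. Your inequality $0\leq\be\int f\eta^{m^\ast}+\frac{1}{m^\ast}\int|\eta'|^{m^\ast}$ can only be contradicted if the infimum of the right-hand side over admissible $\eta$ is strictly negative. When $\int_\R f>0$ and $\be$ is small, spreading $\eta$ out makes both terms tend to $0$ with the potential term positive at leading order, while concentrating $\eta$ on $\supp f_-$ runs into a Poincar\'e-type lower bound for $\int|\eta'|^{m^\ast}$ under the normalization $\int\eta^{m^\ast}=1$; the infimum is then $0$, approached but not attained, and no contradiction results. Your proposed remedy (``slowly varying profile plus localized correction'') is precisely the regime where the two effects cancel, so it is not a proof. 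Also, in the subcase $\int f=0$ the decay $u'\to 0$ at $\pm\infty$ that you invoke is itself unproved. The paper's proof of (ii) is instead a one-dimensional ODE argument: from $-u''+\frac1m|u'|^m=\be f\geq -C^m$ one separates variables and uses that $\int_\R\frac{m\,ds}{|s|^m+m}<\infty$ for $m>1$, so the inequality cannot hold on all of $\R$ for a global classical solution with $\lm=0$; this works for every sign of $\int_\R f$ and is the route you should take.
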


Changing $(\beta,f)$ into $(-\beta,-f)$, one has the following symmetrical result as a corollary of Theorem \ref{main2}.

\begin{cor}
Let $\be_-$ be defined as above, and let $f_+\not\equiv0$. \\
(i) \ Suppose that $N\geq 2$ and $2<m\leq \infty$. Then $\be_-<0$.\\
(ii) \ Suppose that $N=1$ and $2<m< \infty$. Then $\be_-=0$. \\
(iii) \ Suppose that $N=1$ and $m=\infty$. Then $\be_-<0$ provided $f_+\in L^1(\R)$.
\end{cor}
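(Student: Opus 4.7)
The plan is to deduce this statement directly from Theorem \ref{main2} by exploiting the invariance of the ergodic problem under the simultaneous sign change $(\be,f)\mapsto(-\be,-f)$. Both (\ref{EPmb}) and (\ref{EPinfb}) depend on $\be$ and $f$ only through the product $\be f$, so writing $\lm_{m,\be}(f)$ to stress the dependence on the potential, we obtain
\[
\lm_{m,\be}(f)\,=\,\lm_{m,-\be}(-f)\qquad \text{for every } \be\in\R \text{ and every } 2<m\leq\infty.
\]
As a consequence, the sets $\{\be\in\R:\lm_{m,\be}(f)=0\}$ and $\{\be\in\R:\lm_{m,\be}(-f)=0\}$ are reflections of one another across the origin, so by the very definition of $\be_\pm$ one has
\[
\be_+(-f)\,=\,-\be_-(f),\qquad \be_-(-f)\,=\,-\be_+(f).
\]

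Next I would check that, in each of the three regimes, the potential $-f$ satisfies the hypothesis required to invoke the corresponding part of Theorem \ref{main2}. The assumption $f_+\not\equiv 0$ is precisely $(-f)_-\not\equiv 0$, and the additional integrability assumption $f_+\in L^1(\R)$ is precisely $(-f)_-\in L^1(\R)$; no other compatibility condition is involved. With this in place, it suffices to apply Theorem \ref{main2} to $-f$ and translate back via the identities above: part (i) gives $\be_+(-f)>0$, hence $\be_-(f)<0$; part (ii) gives $\be_+(-f)=0$, hence $\be_-(f)=0$; and part (iii), under the hypothesis $f_+\in L^1(\R)$, gives $\be_+(-f)>0$, hence $\be_-(f)<0$.

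The argument reduces entirely to sign-bookkeeping, so there is no genuine obstacle to overcome. The only point worth checking explicitly is the invariance of (\ref{EPmb}) and (\ref{EPinfb}) under $(\be,f)\mapsto(-\be,-f)$, which is clear from inspection of the equations.
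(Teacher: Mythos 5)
Your proposal is correct and is exactly the paper's argument: the authors justify the corollary in one line by "changing $(\beta,f)$ into $(-\beta,-f)$" and applying Theorem \ref{main2} to $-f$, which is precisely your sign-bookkeeping via $\lm_{m,\be}(f)=\lm_{m,-\be}(-f)$ and $\be_-(f)=-\be_+(-f)$. Your explicit check that the hypotheses $(-f)_-\not\equiv 0$ and $(-f)_-\in L^1(\R)$ translate from $f_+\not\equiv 0$ and $f_+\in L^1(\R)$ is the right (and only) thing to verify.
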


\begin{rem}\label{rem:beta.plus.minus} 
If $N\geq 2$ and $f$ is sign-changing, then $\beta_-<0<\beta_+$ for any $2<m\leq \infty$. From the ergodic stochastic control point of view, this implies that there exist two different critical points $\be_+$ and $\be_-$ at which the controller changes his/her optimal strategy. We remark that, if $f$ is nonnegative or nonpositive in $\R^N$, then there is only one such critical point. 
\end{rem}

In the rest of this section, we prove (i)-(iii) of Theorem  \ref{main2} one by one. The key to the proof of claim (i) is the following estimate.  
\begin{prop}\label{be0}
Let $N\geq 2$ and $2<m<\infty$. Set 
\begin{equation*}
\be_0:=\frac{(N-m^\ast)^{m^\ast}}{m^\ast C_0}>0,
\end{equation*}
where $m^\ast:=m/(m-1)$ and $C_0>0$ is the constant in (A2). Then, for any $|\be|\leq \be_0$, there exists a subsolution $u\in C^\infty(\R^N)$ of (\ref{EPmb}) with $\lm=0$. 
\end{prop}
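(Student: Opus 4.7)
The plan is to exhibit an explicit smooth radial subsolution. Guided by the decay condition $|f|\leq C_0\langle x\rangle^{-m^\ast}$ in (A2) and by scaling, the natural ansatz is
$u(x)=A\bigl(\langle x\rangle^{\alpha}-1\bigr)$, with $\alpha:=(m-2)/(m-1)=2-m^{\ast}\in(0,1)$ and $A>0$ a free parameter. Subtracting $A$ enforces $u(0)=0$ without changing the PDE, and $u\in C^\infty(\R^N)$ since $\langle x\rangle\geq 1$. The exponent $\alpha=2-m^\ast$ is forced: it is the unique choice that makes both $-\De u$ and $(1/m)|Du|^m$ decay at precisely the rate $\langle x\rangle^{-m^\ast}$, matching the right-hand side.

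A direct computation gives $\De u=A\alpha\langle x\rangle^{\alpha-4}[N+(N-m^\ast)|x|^2]$ and $|Du|^m=A^m\alpha^m|x|^m\langle x\rangle^{m(\alpha-2)}$. Introducing the dimensionless radial variable $\sigma:=|x|^2/\langle x\rangle^2\in[0,1)$, these two expressions combine into the clean identity
\begin{equation*}
-\De u+\tfrac{1}{m}|Du|^m=\langle x\rangle^{-m^\ast}\,\Psi(\sigma),\qquad
\Psi(\sigma):=\tfrac{A^m\alpha^m}{m}\,\sigma^{m/2}-A\alpha(N-m^\ast\sigma).
\end{equation*}
By (A2), $\be f(x)\geq -|\be|C_0\langle x\rangle^{-m^\ast}$, so it suffices to check $\Psi(\sigma)\leq -|\be|C_0$ for every $\sigma\in[0,1)$. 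Since $\Psi$ is strictly increasing in $\sigma$ (both summands being monotone for $m>2$), this reduces to the single endpoint condition $\sup_{[0,1)}\Psi=\tfrac{A^m\alpha^m}{m}-A\alpha(N-m^\ast)\leq -|\be|C_0$, i.e., $|\be|C_0\leq \phi(A)$ with $\phi(A):=A\alpha(N-m^\ast)-\tfrac{A^m\alpha^m}{m}$.

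Maximizing the one-variable function $\phi$ over $A>0$ (routine calculus) gives the critical point $A^\ast=(N-m^\ast)^{1/(m-1)}/\alpha$ and $\phi(A^\ast)=\tfrac{m-1}{m}(N-m^\ast)^{m^\ast}=(N-m^\ast)^{m^\ast}/m^\ast$. With this choice of $A$, the required inequality holds exactly when $|\be|\leq (N-m^\ast)^{m^\ast}/(m^\ast C_0)=\be_0$, which proves the claim. The only genuine obstacle is spotting the correct ansatz: once the exponent $\alpha=2-m^\ast$ is identified as the one balancing the Laplacian against the Hamiltonian term, and $\sigma$ is used as the natural radial variable, the remaining extremization is a Young-type duality that produces the sharp constant $\be_0$ automatically.
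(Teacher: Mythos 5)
Your proof is correct and follows essentially the same route as the paper: the paper takes the ansatz $u(x)=(K/\alpha)\langle x\rangle^{\alpha}$ with $K=A\alpha$, bounds $|x|^2\langle x\rangle^{-2}\leq 1$ and $|x|^m\langle x\rangle^{-m}\leq 1$ (your endpoint $\sigma\to 1$), and optimizes the same one-variable function to land on the same constant $\be_0$. The only cosmetic differences are your normalization $u(0)=0$ and the explicit use of the variable $\sigma$.
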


\begin{proof}\label{phi0}
We define $u:\R^N\to\R$ by $u(x):=(K/\al)\langle x\rangle^{\al}$, where $\al=(m-2)/(m-1)$ and $K>0$ is some constant that will be specified later. Then, by direct computations, we see that $Du=K\langle x\rangle^{-m^\ast}x$ and $\De u=KN\langle x\rangle^{-m^\ast}-Km^\ast\langle x\rangle^{-m^\ast-2}|x|^2$. Thus, 
\begin{align*}
-\De u+\frac1m|Du|^m
&=\langle x\rangle^{-m^\ast}\Big\{-KN+Km^\ast|x|^2\langle x\rangle^{-2}\Big\}+\frac{K^m}{m}|x|^m\langle x\rangle^{-mm^\ast}\\
&=\langle x\rangle^{-m^\ast}\Big\{-KN+Km^\ast|x|^2\langle x\rangle^{-2}+\frac{K^m}{m}|x|^m\langle x\rangle^{-m}\Big\}\\
&\leq \langle x\rangle^{-m^\ast}\Big\{-(N-m^\ast)K+\frac{K^m}{m}\Big\}.
\end{align*}
Since the function $K\mapsto f(K):=(K^m/m)-(N-m^\ast)K$ attains its minimum $-(1/m^\ast)(N-m^\ast)^{m^\ast}$ at $K=(N-m^\ast)^{1/(m-1)}=:K_m$, we choose $K=K_m$ in the definition of $u$ to obtain
\begin{equation*}
-\De u+\frac1m|Du|^m+\be f\leq \langle x\rangle^{-m^\ast}\Big\{ |\be| C_0-\frac{1}{m^\ast}(N-m^\ast)^{m^\ast}\Big\}\quad \text{in }\ \R^N.
\end{equation*}
This implies that $u$ is a subsolution of (\ref{EP}) with $\lm=0$ provided $|\be|\leq \be_0$. Hence, we have completed the proof.
\end{proof}

As a corollary of this proposition, one can prove claim (i) of Theorem \ref{main2}.
\begin{proof}[Proof of Theorem \ref{main2} (i)]
Let $\be_0$ be the constant taken from Proposition \ref{be0}. Then, it is obvious that $\be_+\geq \be_0>0$ for any $2<m<\infty$. Moreover, since $m^\ast\to 1$ as $m\to\infty$, we see that $\be_+\geq \be_0\geq (N-1)/(2C_0)>0$ for any large $m$. Hence, letting $m\to\infty$ and noting that $\lm_{m,\be}$ converges to $\lm_{\infty,\be}$  as $m\to\infty$ for any $\be\in\R$, we conclude that $\lm_{\infty,\be}=0$ for any $\be\leq (N-1)/(2C_0)$. This yields that $\be_+>0$ for $N\geq 2$ and $m=\infty$. Hence, we have completed the proof.
\end{proof}

\begin{rem}\label{remIc15}
In the case where $N\geq 2$ and $2<m<\infty$, the positivity $\be_+>0$ has been observed in \cite[Proposition 2.4]{Ic15} when $f\in C^{0,1}(\R^N)$. The new ingredient here is that we have an explicit lower bound of $\be_+$, uniform in $m$, which leads to the positivity of $\be_+$ not only for $2<m<\infty$ but also for $m=\infty$. Recall that $\be_+=0$ for $N=m=2$ (see \cite{Ic13}). This exhibits a striking contrast between quadratic and superquadratic cases. 
\end{rem}

In what follows, we concentrate on the case where $N=1$, in which case the ergodic problem (\ref{EPmb}) takes the form 
\begin{equation}\label{onedim}
\lm - u''+\frac{1}{m}|u'|^m-\be f=0\quad \text{in }\ \R,\qquad u(0)=0.    
\end{equation}
We first prove claim (ii) of Theorem \ref{main2}. 

\begin{proof}[Proof of Theorem \ref{main2} (ii)]
We may assume without loss of generality that $f\in C^{0,1}(\R^N)$. We prove that $\lm_{m,\be}<0$ for any $\be>0$.
We argue by contradiction assuming that $\lm_{m,\be}=0$ for some $\be>0$. Let $C>0$ be such that  $C^m=\max_{\R^N}(\be f)_-$, and let $u\in C^{2}(\R)$ be a classical solution of (\ref{onedim}) with $\lm=0$. Then, we see that
\begin{equation*}
-u''+\frac{1}{m}|u'|^m=\be f\geq -C^m\quad \text{in }\ \R.
\end{equation*}
By changing the variable such as $s=u'(x)/C$, we have 
\begin{equation*}
C^{m-1}x\geq \int_{u'(0)/C}^{u'(x)/C}\frac{m}{|s|^m+m}\,ds\geq -\int_\R\frac{m}{|s|^m+m}\,ds>-\infty\quad \text{for all }x\in\ \R.
\end{equation*}
Sending $x\to -\infty$, we get a contradiction. Hence, $\lambda_{m,\be}<0$ for all $\be>0$.
\end{proof}

We finally prove claim (iii) of Theorem \ref{main2}.
Let $N=1$ and $m=\infty$. In this case, (\ref{EPinfb}) can be written as
\begin{equation}\label{EP3}
\max\{\lm-u''-\be f,|u'|-1\}=0\quad \text{in }\ \R,\qquad u(0)=0.
\end{equation}

\begin{prop}\label{L}
Let $N=1$ and $m=\infty$. Suppose that $f_-\not\equiv0$, and set 
\begin{equation*}
L:=\int_\R f_-(u)du,\qquad
K:=\sup\Big\{\int_x^y -f(u)du\,\Big|\,x,y\in\R,\,x<y\Big\}.
\end{equation*}
Then $2/L\leq \be_+\leq 2/K$, where $2/L:=0$ for $L=\infty$ and $2/K:=0$ for $K=\infty$. 

\end{prop}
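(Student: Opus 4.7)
The plan is to establish the two one-sided bounds independently. The inequality $\be_+\geq 2/L$ will come from an explicit classical subsolution of (\ref{EP3}) with $\lm=0$ built directly from the primitive of $f_-$, while the upper bound $\be_+\leq 2/K$ will follow by integrating the subsolution inequality over an interval that nearly realizes $K$ and confronting the result with the gradient constraint $|u'|\leq 1$.

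For the lower bound, the case $L=\infty$ is vacuous, so suppose $L<\infty$ and fix $\be\in[0,2/L]$. Define
\begin{equation*}
F(x):=-1+\be\int_{-\infty}^x f_-(t)\,dt,\qquad u(x):=\int_0^x F(t)\,dt.
\end{equation*}
Since $f_-\geq 0$ and $\int_\R f_-=L$, the function $F$ is continuous, nondecreasing, and takes values in $[-1,-1+\be L]\subset[-1,1]$; hence $u\in C^1(\R)$ with $u(0)=0$ and $|u'|=|F|\leq 1$. The decomposition $f=f_+-f_-$ gives, almost everywhere,
\begin{equation*}
-u''-\be f=-\be f_- -\be(f_+-f_-)=-\be f_+\leq 0,
\end{equation*}
and because $u\in W^{2,\infty}_{\mathrm{loc}}(\R)$ this inequality transfers to the viscosity sense. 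Therefore $u$ is a viscosity subsolution of (\ref{EP3}) with $\lm=0$, showing $\lm_{\infty,\be}=0$ and hence $\be_+\geq 2/L$.

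For the upper bound, fix $\be>0$ with $\lm_{\infty,\be}=0$ and let $u\in C^{0,1}(\R)$ be a viscosity subsolution of (\ref{EP3}) with $\lm=0$. Then $\|u'\|_\infty\leq 1$, and the viscosity inequality $-u''-\be f\leq 0$ combined with continuity of $f$ yields $u''\geq -\be f$ in the sense of distributions. Convolving with a standard mollifier $\rho_\de\geq 0$, nonnegativity of $\rho_\de$ preserves the inequality, so $u_\de:=u*\rho_\de$ satisfies $u_\de''\geq -\be f_\de$ pointwise with $f_\de:=f*\rho_\de$. Integrating from $x$ to $y$ and using $|u_\de'|\leq 1$,
\begin{equation*}
2\geq u_\de'(y)-u_\de'(x)\geq \be\int_x^y\bigl(-f_\de(t)\bigr)\,dt.
\end{equation*}
Letting $\de\to 0$ (so $f_\de\to f$ uniformly, since $f\in C_0(\R)$) and then taking the supremum over $x<y$ gives $2\geq \be K$, i.e., $\be\leq 2/K$; if $K=\infty$, the same argument applied to intervals on which $\int -f$ is arbitrarily large forces $\be\leq 0$, consistent with the convention $2/K=0$. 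Taking the supremum of admissible $\be$ yields $\be_+\leq 2/K$.

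The main technical point I anticipate is the passage from the viscosity subsolution property $-u''-\be f\leq 0$ to its distributional counterpart $u''\geq -\be f$, which is needed to justify the convolution step. This is a standard fact for continuous viscosity subsolutions of semilinear equations with continuous data, and can in any case be bypassed by first approximating $f$ in $C_0(\R)$ by $C^{0,1}$ functions via Proposition \ref{staf} and then invoking Proposition \ref{C11} to replace $u$ by a smooth classical subsolution, after which the integration argument is entirely elementary.
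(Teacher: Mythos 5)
Your proof is correct and follows essentially the same two-step strategy as the paper: an explicit subsolution built from a primitive of $f_-$ (you normalize by anchoring $u'=-1$ at $-\infty$, the paper instead adds a constant $C$ chosen so that $u'$ ranges over $[-1,1]$, but the computation $-u''-\be f=-\be f_+\le 0$ is identical), and for the upper bound an integration of the subsolution inequality against the constraint $|u'|\le 1$. The fallback you describe for the upper bound --- approximating $f$ in $C^{0,1}$ and invoking Proposition \ref{C11} to get a smooth classical subsolution at level $\lm=-\de$ --- is exactly the paper's route, so no gap remains.
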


\begin{proof}
We first show that $2/L\leq \be_+$. We may assume $L<\infty$, otherwise the inequality is obvious. Notice here that $L>0$ by assumption. We set $\be_0:=2/L$ and construct a classical subsolution $u\in C^{2}(\R)$ of (\ref{EP3}) with $\lm=0$ and $\be=\be_0$.  
Let us consider the linear equation
\begin{equation}\label{lin}
-u'' +\be_0 f_-=0\quad \text{in }\R,\qquad u(0)=0.
\end{equation}
Then, for any $C\in\R$, the function $u\in C^2(\R)$ defined by
\begin{equation}\label{repu}
u(x)=\be_0 \int_{0}^x F(y)dy+Cx,\quad F(y):=\int_{0}^y f_-(u)du,
\end{equation} 
is a classical solution to (\ref{lin}). We now choose 
\begin{equation*}
C:=\frac1L\left(\int_{-\infty}^0 f_-(u)du-\int_0^{\infty}f_-(u)du\right).
\end{equation*}
Then, noting that $u'(x)=\be_0 F(x)+C$ for all $x\in \R$, we have
\begin{align*}
u'(x)\leq \frac2L\int_{0}^\infty f_-(u)du+C=1,\quad 
u'(x)\geq -\frac2L\int_{-\infty}^0 f_-(u)du+C=-1
\end{align*}
for all $x\in\R$. Hence, $u$ with the above $C$ is a subsolution of (\ref{EP3}) with $\lm=0$ and $\be=\be_0$, which implies that $\be_+ \geq 2/L$. 

We next show that $\be_+\leq 2/K$. Recall that $K>0$ by assumption. We argue by contradiction assuming that $\be_+> 2/K$. Fix any $\be$ such that $2/K<\be<\be_+$. Then, $\lm_{\infty,\be}=0$ by the definition of $\be_+$. Fix an arbitrary $\de>0$. Then, in view of Proposition \ref{C11}, there exists a classical subsolution $u\in C^{\infty}(\R)$ of (\ref{EP3}) with $\lm=-\de$.  In particular, we have
\begin{equation*}
-\de-u''-\be f\leq 0,\quad |u'|\leq 1\quad \text{in \ }\R.
\end{equation*}
This yields that, for any $x,y\in\R$ with $x<y$, 
\begin{align*}
\be \int_x^y-f(s)ds
\leq \int_x^y(u''(s)+\de)ds
=u'(y)-u'(x)+\de(y-x)
\leq 2+\de(y-x).
\end{align*}
Letting $\de\to 0$ and taking the supremum over all $x,y\in\R$ such that $x<y$, we obtain $\be K\leq 2$, which is a contradiction. Hence, we have completed the proof. 
\end{proof}

Claim (iii) of Theorem \ref{main2} is a direct consequence of the above proposition.

\begin{rem}\label{fminus}
Suppose that $f_+\equiv0$, that is, $f\leq 0$ in $\R$. Then $L=K=\int_\R |f(u)|du$, so that $\be_+=2/\int_\R |f(u)|du$. This implies that $\be_+>0$ if and only if $f\in L^1(\R)$.
\end{rem}

\begin{rem}
As far as the uniqueness for $u$, up to an additive constant, is concerned, equation (\ref{EPinf.intro}) with $\lm=\lm_{\infty}$ may have multiple solutions  in general. Indeed, let $N=1$ and $f(x):=-(1-|x|)_+$ in (\ref{EPinf.intro}). Then, in view of Remark \ref{fminus}, it is not difficult to observe that $\lm_{\infty}=0$. Furthermore, we define $u:\R\to\R$ by
\begin{equation*}
u(x):=\int_{0}^x F(y)dy+Cx,\quad F(y):=\int_{0}^y (1-|u|)_+du,
\end{equation*} 
where $C\in \R$ is a constant. Then, similarly as in the proof of Proposition \ref{L}, we see that $u$ is a classical solution of (\ref{EPinf.intro}) for any $C\in [-1/2,1/2]$. In particular, uniqueness for $u$ does not hold without any growth condition as $|x|\to\infty$. We remark here that, if $N=1$ and $f$ is convex and superlinear with respect to $x$, then, up to an additive constant, there exists only one viscosity solution $u$ of (\ref{EPinf.intro}) which satisfies $u(x)/|x|\to1$ as $|x|\to\infty$ (see \cite[Proposition 5.1]{Hy15}). At this stage, we do not know any uniqueness result for (\ref{EPinf.intro}) under our assumptions (A1)-(A2). 
\end{rem}

% ------------------------------------------------------------------------

\subsection*{Acknowledgment}
The first author's research was partially supported by Spanish grant MTM2011-25287. The second author's research was partially supported  by JSPS KAKENHI Grant Number 15K04935.

% ------------------------------------------------------------------------
\end{document}